\documentclass[12pt,a4paper,reqno]{amsart}

\usepackage{float}

\usepackage{anysize}

\usepackage[utf8]{inputenc}

\usepackage{amsmath}
\usepackage{amssymb}
\usepackage{array}

\usepackage{graphicx}

\usepackage{color}

\newtheorem{observation}{Remark}
\newtheorem{theorem}{Theorem}[section]
\newtheorem{corollary}[theorem]{Corollary}
\newtheorem{lemma}[theorem]{Lemma}

\newtheorem{example}[theorem]{Example}

\newtheorem{conjecture}[theorem]{Conjecture}

\begin{document}

\title[Some sums over the non-trivial zeros of $\zeta(s)$]{Some sums over the non-trivial zeros \\ of the Riemann zeta function}
\author{Jesús Guillera}
\address{Avda. Cesáreo Alierta, 31, esc. izda. $4^{o}$-A, Zaragoza 50008, Spain}
\email{jguillera@gmail.com}
\date{}

\maketitle

\begin{abstract}
We prove some identities, which involve the non-trivial zeros of the Riemann zeta function. From them we derive some convergent asymptotic expansions related to the work by Cramér, and also new representations for some arithmetical functions in terms of the non-trivial zeros. 
\end{abstract}

\section{Introduction}
In $1737$ Euler proved that for $s>1$, the series and the product below are convergent and that
\[ 
\sum_{n=1}^{\infty} \frac{1}{n^s}=\prod_p \left( 1-\frac{1}{p^s} \right)^{-1},
\]
showing a connection among the series over the positive integers in the left side and the product over the prime numbers in the right side. In $1859$ Riemann had the bright idea of extending the function to the set of complex numbers in an analytic way, and he called $\zeta(s)$ to this function, which is meromorphic having only a pole at $s=1$. He achieved it by proving the functional equation
\[
\zeta(1-s)=2(2\pi)^{-s} \Gamma(s) \cos \left( \frac{\pi s}{2} \right) \zeta(s).
\]
The function $\zeta(s)$ has trivial zeros at $s=-2n$ for $n=1,2,\dots$. The other zeros $\rho=\beta+i\gamma$ are therefore called the non-trivial zeros and are complex. The following convergent series for $\zeta(s)$, valid for all $s \neq 1$, was proved by Hasse in $1930$ and rediscovered by Jonathan Sondow \cite{sondow}:
\[
\zeta(s)=\frac{1}{1-2^{1-s}} \sum_{n=0}^{\infty} \frac{1}{2^{n+1}}\sum_{k=0}^{n} (-1)^k \binom{n}{k} (k+1)^{-s}.
\]
In $1893$ Hadamard proved that 
\[
\xi(s):=\pi^{-s/2} (s-1) \Gamma \left( 1 + \frac{s}{2} \right) \zeta(s)=\frac12 \prod_{\rho} \left( 1 - \frac{s}{\rho} \right).
\]
Riemann also knew this product and arrived at it using, in Edwards' words \cite[p. 18]{edwards}, an obscure argument. As the product shows that the zeros of zeta determine the whole function, Riemann thought that there had to be a connection among the prime numbers and the zeros of zeta. He showed directly this relation by proving an explicit formula for the number of primes $\pi(x)$ less or equal than $x$ in terms of the complex zeros of $\zeta(s)$. A simpler variant of his formula is 
\[
\psi(x)=\sum_{n \leq x} \Lambda(n)=x - \log 2\pi - \frac12 \left( 1-\frac{1}{x^2} \right)- \sum_{\rho} \frac{x^\rho}{\rho}, 
\]
valid for $x$ not a prime power, where $\Lambda(n)$ is the Mangoldt function, which is defined by $\Lambda(n)=\log p$ if $n$ is a power of $p$, and $0$ otherwise. The asymptotic approximation $\psi(x) \sim x$ is equivalent to the Prime Number Theorem conjectured by Gauss, namely
\[
\pi(x) \sim \frac{x}{\log x}.
\]
From the functional equation of zeta, Riemann proved that all the non-trivial zeros of zeta are in the band $\beta \in [0,1]$ and in $1896$ Hadamard and de La Vallée Poussin achieved to prove the Prime Number Theorem by showing that $\zeta(s)$ has no zeros of the form $\rho=1+i\gamma$. The Riemann's famous conjecture stating that the real part of all the non-trivial zeros was equal to $1/2$ remains unproved. Riemann introduced the notation $\rho=1/2+i \tau$ for the zeros of zeta. In this way his conjecture is the statement that all the $\tau's$ are real. In addition, Riemann conjectured a convergent asymptotic behavior, as $T \to +\infty$, for the number of complex zeros with positive imaginary part less or equal than $T$:
\[
\mathcal{N}(T) \sim \frac{T}{2\pi} \log \frac{T}{2\pi}-\frac{T}{2\pi},
\]
which was proved by von Mangoldt, who improved this result, by showing in $1905$ that
\[
\mathcal{N}(T) = \frac{T}{2\pi} \log \frac{T}{2\pi}-\frac{T}{2\pi} + \mathcal{O}(\log T).
\]
Around $1912$ Backlund derived an exact formula for counting the zeros \cite{edwards}:
\[
\mathcal{N}(T)=\frac{1}{\pi} \theta(T)+\frac{1}{\pi} \arg \zeta \left(\frac12+iT \right)+1,
\]
where $\theta(T)$ is the Riemann-Siegel theta function, which is defined by
\begin{align}
\theta(T) &=\arg \Gamma \left(\frac14+\frac{i}{2}T \right)-\frac{\log \pi}{2} T
\\ \nonumber &= \frac{T}{2} \log \frac{T}{2\pi}-\frac{T}{2}-\frac{\pi}{8}+\frac{1}{48T}+\frac{7}{5760T^3}+\frac{31}{80640T^5}+\cdots. \nonumber
\end{align}
In $1911$ E. Landau proved the following formula for the Mangoldt function \cite{Landau}, \cite{Kalape}:
\[
\Lambda(t)=\frac{-2\pi}{T} \sqrt{t} \sum_{0< {\rm Re} \; \tau \leq T} \cos(\tau \log t)
+\mathcal{O}(\frac{\log T}{T}),
\]
which implies
\[
\Lambda(t)=-2\pi \sqrt{t} \lim_{T \to +\infty} \frac{1}{T}  \sum_{0< {\rm Re} \; \tau \leq T} \cos(\tau \log t).
\]
It has the surprising property that neglecting a finite number of zeros of zeta we still recover the Mangoldt function. Related to it is the self-replicating property of the zeros of zeta observed recently in the statistics of \cite{perez-marco}, and later proved in \cite{ford-zaha}.

\par In this paper we prove a formula which resembles in a certain way to that of Landau:
\[
\Lambda(t) = -4 \pi \sqrt{t} \cot \frac{x}{2} \sum_{{\rm Re} \, \tau>0} \left( \frac{\sinh x \tau}{\sinh \pi \tau} \cos(\tau \log t) \right)+2\pi \cot \frac{x}{2} \left( t-\frac{1}{t^2-1} \right)+o\left( \cot \frac{x}{2} \right),
\]
which implies
\[
\Lambda(t)=-4 \pi \sqrt{t} \lim_{x \to \pi^{-}} \left( \cot \frac{x}{2} \sum_{{\rm Re} \, \tau>0} \frac{\sinh x \tau}{\sinh \pi \tau} \cos(\tau \log t) \right).
\]
We see that it shares with that given by Landau the property of invariance when we neglect a finite number of zeros. However our representation is smooth. In addition we give 
representations, of the same nature, for the functions of Moebius and Euler-Phi. Other results in this paper, are some asymptotic expansions related to Crámer's work but which are more explicit, and other kind of limit evaluations, one of which writes
\begin{align}
\lim_{x \to \pi^{-}} \sum_{\gamma>0} \frac{\sinh x\gamma}{\sinh \pi \gamma} 
\left( \frac{\log 2}{\sqrt{2}} \cos(\gamma \log t)-\frac{\Lambda(t)}{\sqrt{t}}\cos(\gamma \log 2)
\right) \nonumber \\ = \frac{\log 2}{\sqrt{2}} \left( \frac{\sqrt{t}}{2}-\frac{1}{2(t^2-1)\sqrt{t}} \right)-\frac{\Lambda(t)}{\sqrt{t}} \frac{5\sqrt{2}}{12},
\end{align}
if we assume the Riemann Hypothesis.

\section{Series involving the Riemann zeta function}

The formulas that we will prove involve a sum over the non-trivial zeros of zeta. We use the notation $\rho=\beta+i \gamma$ for these non-trivial zeros. Following Riemann, we define $\tau=-i (\rho-\frac12)$. Hence $\rho=1/2+i \tau$ (Riemann used the notation $\alpha$ instead of $\tau$). The Riemann Hypothesis is the statement that all the $\tau's$ are real. We use $C$ for Euler's constant as Euler did. As usual in papers of Number Theory, $\log$ denotes the naperian logarithm.

\subsection{Introduction}

We prove some lemmas that we will need later

\begin{lemma}\label{lema-cota-1}
Let $s=\sigma+it$. Then, for $t>1$, we have
\begin{equation}\label{zsin-1}
\left| \frac{z^s}{\sin \pi s} \right| \leq 4 \, e^{-t(\pi+\arg(z))},
\end{equation}
and for $t<-1$, we have
\begin{equation}\label{zsin-2}
\left| \frac{z^s}{\sin \pi s} \right| \leq 4 \, e^{t(\pi-\arg(z))},
\end{equation}
in the following cases: 
\begin{enumerate}
\item when $\sigma>0$ and $|z|<1$
\item when $\sigma<0$ and $|z|>1$
\end{enumerate}
\end{lemma}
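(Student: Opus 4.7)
The plan is a direct computation separating the modulus of the numerator from a lower bound on the denominator.

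First I would write $z^s = \exp(s \log z)$ with $\log z = \log|z| + i\arg z$ and $s = \sigma + it$, which yields the exact identity
\[
|z^s| = |z|^\sigma \, e^{-t \arg z}.
\]
For the denominator I would use $\sin \pi s = (e^{i\pi s}-e^{-i\pi s})/(2i)$ and the triangle inequality in reverse:
\[
2|\sin \pi s| \geq \bigl|\,|e^{i\pi s}| - |e^{-i\pi s}|\,\bigr| = |e^{\pi t} - e^{-\pi t}| = 2\sinh(\pi|t|).
\]
Hence $|\sin \pi s| \geq \sinh(\pi|t|)$.

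Next, for $|t|>1$ I would bound $\sinh$ from below by a pure exponential. Writing $\sinh(\pi|t|) = \tfrac12 e^{\pi|t|}(1-e^{-2\pi|t|})$ and using $1-e^{-2\pi|t|} > 1-e^{-2\pi} > 1/2$ when $|t|>1$, I get $\sinh(\pi|t|) > e^{\pi|t|}/4$, and therefore
\[
\frac{1}{|\sin \pi s|} < 4\, e^{-\pi|t|}.
\]
Combining this with the expression for $|z^s|$ gives
\[
\left|\frac{z^s}{\sin \pi s}\right| < 4\, |z|^\sigma\, e^{-t\arg z - \pi|t|}.
\]
When $t>1$, $-\pi|t| = -\pi t$, producing the exponent $-t(\pi + \arg z)$ of \eqref{zsin-1}; when $t<-1$, $-\pi|t| = \pi t$, producing $t(\pi - \arg z)$ as in \eqref{zsin-2}.

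Finally, to reduce the constant $|z|^\sigma$ to $1$, I would invoke the two case hypotheses: if $\sigma>0$ and $|z|<1$, then $|z|^\sigma < 1$; if $\sigma<0$ and $|z|>1$, then likewise $|z|^\sigma = |z|^{-|\sigma|} < 1$. Either way the factor is absorbed, completing both inequalities. The only mildly delicate step is the sharpness of the lower bound on $\sinh(\pi|t|)$ that justifies the explicit constant $4$; everything else is direct bookkeeping of real and imaginary parts.
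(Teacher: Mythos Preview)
Your proof is correct and follows essentially the same route as the paper: compute $|z^s|=|z|^{\sigma}e^{-t\arg z}$, bound $|\sin\pi s|\ge \sinh(\pi|t|)$ via the reverse triangle inequality, then use $\sinh(\pi|t|)>e^{\pi|t|}/4$ for $|t|>1$ and finally drop the factor $|z|^{\sigma}$ using the hypotheses on the signs of $\sigma$ and $\log|z|$. The paper treats only $t>1$ explicitly and leaves $t<-1$ as analogous, whereas you handle both at once via $|t|$, but the argument is the same.
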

\begin{proof}
We prove the lemma for $t>1$:
\begin{align}
\left| \frac{z^s}{\sin \pi s} \right| &= 2 \left| 
\frac{ e^{(\sigma+it)(\log|z|+i\arg(z))} }{ e^{i\pi(\sigma+it)}-e^{-i\pi(\sigma+it)} } \right| \leq 2 \, \frac{ e^{\sigma \log |z|-t \arg(z)} }{ |e^{-i \pi \sigma} e^{\pi t}|-|e^{i \pi \sigma} e^{-\pi t}| } \nonumber \\ & \leq  2 \, \frac{e^{\sigma \log |z|}e^{-t \arg(z)} }{ e^{\pi t}-e^{-\pi t} } <  4 \, \frac{e^{\sigma \log |z|}e^{-t \arg(z)}}{e^{\pi t}} < 4 \, e^{-t(\pi+\arg(z))}. \nonumber
\end{align}
The proof for $t<-1$ is similar.
\end{proof}

\begin{lemma}\label{lema-cota-2}
Let $s=\sigma+it$, with $\sigma$ being a semi-integer. Then, we have
\begin{equation}\label{cota-S}
\left| \frac{z^s}{\sin \pi s} \right| \leq 2 \, e^{\sigma \log|z|}.
\end{equation}
\begin{proof}
We prove it for $t \geq 0$:
\[
\left| \frac{z^s}{\sin \pi s} \right| = \left| \frac{e^{(\log|z|+i\arg(z))(\sigma+it)}}{\sin \pi(\sigma+it)} \right| = \frac{e^{\sigma \log|z|-t\arg(z)}}{\cosh \pi t} \leq 2 \, e^{\sigma \log|z|} e^{-t(\pi+\arg(z))}.
\]
In a similar way for $t<0$, we get 
\[
\left| \frac{z^s}{\sin \pi s} \right| \leq 2 \, e^{\sigma \log|z|} e^{t(\pi-\arg(z))}.
\]
Both cases imply (\ref{cota-S}).
\end{proof}
\end{lemma}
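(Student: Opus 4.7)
The plan is to exploit the fact that when $\sigma$ is a semi-integer, $\sin \pi s$ reduces to a hyperbolic cosine, which gives us a sharper denominator estimate than the one used in Lemma~\ref{lema-cota-1} and removes the need to split into cases by the size of $|z|$.

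First, I would write $\sin \pi s = \sin(\pi\sigma)\cos(i\pi t) + \cos(\pi\sigma)\sin(i\pi t)$. Since $\sigma = n + 1/2$ for some integer $n$, we have $\cos(\pi\sigma) = 0$ and $\sin(\pi\sigma) = \pm 1$, so
\[
|\sin \pi s| = |\cos(i\pi t)| = \cosh(\pi t).
\]
Next, I would use the elementary bound $\cosh(\pi t) \geq \tfrac12 e^{\pi|t|}$, which is immediate from $2\cosh x = e^x + e^{-x} \geq e^{|x|}$.

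Then I would expand $|z^s| = e^{\sigma \log|z| - t\arg(z)}$ and combine, obtaining for $t \geq 0$
\[
\left|\frac{z^s}{\sin \pi s}\right| \leq 2 \, e^{\sigma \log|z|} e^{-t(\pi + \arg z)},
\]
and for $t \leq 0$ the analogous bound with exponent $t(\pi - \arg z)$. These two intermediate inequalities are exactly what the author records in the displayed computations of the proof. Taking the principal branch so that $\arg z \in (-\pi,\pi]$, the extra exponential factor is in each case $\leq 1$, which yields the claimed bound (\ref{cota-S}).

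There is no real obstacle here; the only thing to be careful about is choosing the branch of the argument so that $\pi + \arg z \geq 0$ and $\pi - \arg z \geq 0$ simultaneously, so that the auxiliary exponentials can be dropped. The essential point of the lemma, as compared with Lemma~\ref{lema-cota-1}, is that the semi-integer hypothesis on $\sigma$ makes $|\sin \pi s|$ bounded below by $1$ uniformly in $t$, so the bound holds for every $z$ without any restriction on $|z|$.
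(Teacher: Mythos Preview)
Your proof is correct and follows essentially the same approach as the paper: you reduce $|\sin\pi s|$ to $\cosh\pi t$ via the semi-integer hypothesis, bound $\cosh\pi t\ge\tfrac12 e^{\pi|t|}$, and then discard the harmless factor $e^{-t(\pi+\arg z)}$ (resp.\ $e^{t(\pi-\arg z)}$) using the principal branch of the argument. Your write-up is slightly more explicit about the trigonometric identity and the role of the branch choice, but the argument is the same.
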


\begin{lemma}\label{lema-cota-3}
Let us consider the vertices $A=-1/2+iT$, \, $B=R+iT$, \, $C=R-iT$, \, $D=-1/2-iT$, \, $C'=S-iT$, \, $B'=S+iT$, where $S=1/2-2 \lfloor T \rfloor$ and $R=1/2+\lfloor T \rfloor$. If $\phi(s)$ is a function such that
\[
\lim_{T \to +\infty} \frac{|\phi(\sigma \pm iT)|}{e^{c T}}=\lim_{T \to +\infty} \frac{|\phi(R \pm it)|}{e^{c T}}=\lim_{T \to +\infty} \frac {|\phi(S \pm it)|}{e^{cT}}=0 \quad \forall c>0,
\]
then 
\[
\lim_{T \to +\infty} \left( \int_{H} \phi(s) \frac{z^s}{\sin \pi s} ds \right)=0
\]
for $|z|<1$ and $H$ being each of the segments $AB$, $BC$, $CD$, and for $|z|>1$ and $H$ being each of the segments $AB'$, $B'C'$, $C'D$.
\end{lemma}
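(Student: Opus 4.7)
The plan is to combine Lemmas \ref{lema-cota-1} and \ref{lema-cota-2} to bound $|z^s/\sin\pi s|$ pointwise along each of the six segments by a quantity that decays exponentially in $T$, and then to use the subexponential-growth hypothesis on $\phi$ together with the fact that each segment has length $O(T)$ to conclude that the integrals tend to $0$.

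On the two vertical segments $BC$ (with $|z|<1$) and $B'C'$ (with $|z|>1$), the real part of $s$ is respectively $R=1/2+\lfloor T\rfloor$ and $S=1/2-2\lfloor T\rfloor$, both semi-integers, so Lemma \ref{lema-cota-2} applies directly and gives $|z^s/\sin\pi s|\le 2|z|^R$ and $|z^s/\sin\pi s|\le 2|z|^S$ respectively. In both cases the sign of $\log|z|$ and of the exponent combine to give an exponential decay in $T$: $|z|^R\le|z|^{-1/2}e^{T\log|z|}$ with $\log|z|<0$ in the first case, and $|z|^S\le|z|^{1/2}e^{-2T\log|z|}$ with $\log|z|>0$ in the second. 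Multiplying by the segment length $2T$ and by $|\phi(s)|=o(e^{cT})$ with $c$ chosen smaller than the decay rate (legitimate because the hypothesis holds for every $c>0$) forces the corresponding integral to $0$.

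On the horizontal segments $AB$, $CD$ associated with $|z|<1$, the imaginary part of $s$ is $\pm T$ and Lemma \ref{lema-cota-1} applies on the portion where $\sigma>0$, yielding $|z^s/\sin\pi s|\le 4 e^{\mp T(\pi\pm\arg z)}$. For the short complementary piece $\sigma\in[-1/2,0]$ one bounds $|z|^\sigma\le|z|^{-1/2}$ directly and uses $|\sin\pi(\sigma\pm iT)|\ge\sinh\pi|T|\ge\tfrac14 e^{\pi|T|}$ for $T$ large, producing an estimate of the same order up to a constant depending only on $z$. For the segments $AB'$ and $C'D$ with $|z|>1$, the sign condition $\sigma\le-1/2<0$ is satisfied throughout, so case (b) of Lemma \ref{lema-cota-1} applies without splitting. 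Thus on each horizontal segment the integrand is bounded by $o(e^{cT})\cdot K_z\, e^{-\alpha T}$ with $\alpha=\pi\pm\arg z>0$ fixed; choosing $c<\alpha$ and multiplying by the length $O(T)$ gives $0$ in the limit.

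The main obstacle is really just careful bookkeeping: one has to verify the sign conditions of Lemma \ref{lema-cota-1} on each of the six segments, handle the small range $\sigma\in[-1/2,0]$ separately in the two horizontal $|z|<1$ cases where the sign hypothesis fails, and keep track of the branch of $z^s$ so that the constants $\pi\pm\arg z$ are strictly positive. Once these are in place every segment reduces to the same template, namely length $O(T)$ times exponential decay $e^{-\alpha T}$ times subexponential growth $o(e^{cT})$ with $c<\alpha$, and the limit is $0$.
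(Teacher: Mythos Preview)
Your argument is correct and follows exactly the route the paper takes: Lemma~\ref{lema-cota-1} for the horizontal segments and Lemma~\ref{lema-cota-2} for the vertical ones, combined with the subexponential hypothesis on $\phi$ and the $O(T)$ length of each segment. In fact you are more careful than the paper, which simply cites Lemma~\ref{lema-cota-1} for $AB$ and $CD$ without remarking that its hypothesis $\sigma>0$ fails on the short piece $\sigma\in[-1/2,0]$; your direct estimate $|\sin\pi(\sigma\pm iT)|\ge\sinh\pi T$ there patches that cleanly. (One trivial slip: on $B'C'$ the bound should read $|z|^S\le|z|^{5/2}e^{-2T\log|z|}$ rather than $|z|^{1/2}e^{-2T\log|z|}$, since $\lfloor T\rfloor\ge T-1$; this of course does not affect the conclusion.)
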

\begin{proof}
By Lemma \ref{lema-cota-1} the integrals along the segments $AB$, $CD$ for $|z|<1$, and $AB'$, $C'D$ for $|z|>1$ are equal to $0$, and by Lemma \ref{lema-cota-2} the integrals along the segments $BC$ for $|z|<1$ and $B'C'$ for $|z|>1$ are also $0$.
\end{proof}

\begin{lemma}\label{lema-Zeta}
The function $\phi(s)=\zeta'(s+1/2)/\zeta(s+1/2)$ satisfies the hypothesis of Lemma \ref{lema-cota-3} for suitable numbers $T$.
\end{lemma}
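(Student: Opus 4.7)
The plan is to verify each of the three decay requirements of Lemma \ref{lema-cota-3} for $\phi(s)=\zeta'(s+1/2)/\zeta(s+1/2)$ by splitting the boundary into its three kinds of sides: the right vertical segment at abscissa $R=1/2+\lfloor T \rfloor$, the left vertical segment at abscissa $S=1/2-2\lfloor T \rfloor$, and the two horizontal segments at height $\pm T$. In every regime I would aim to show that $|\phi|$ grows at most polynomially in $T$, so that dividing by $e^{cT}$ forces the limit to be $0$ for each $c>0$.

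On the right vertical segment one has $\mathrm{Re}(s+1/2)\geq 1+\lfloor T \rfloor\geq 2$, so the absolutely convergent Dirichlet series $\zeta'(w)/\zeta(w)=-\sum_n \Lambda(n)n^{-w}$ yields the uniform estimate $|\phi(R\pm it)|\leq \sum_n \Lambda(n)/n^{2}$, a fixed constant, and the desired limit is immediate. On the left vertical segment, $\mathrm{Re}(s+1/2)=1-2\lfloor T \rfloor\to -\infty$; here I would differentiate logarithmically the functional equation recalled in the introduction to express $\zeta'/\zeta$ at $s+1/2$ in terms of $\zeta'/\zeta$ at the reflected point $1/2-s$ (where $\mathrm{Re}\geq 2\lfloor T \rfloor$, so the previous Dirichlet bound applies), plus the logarithmic derivatives of the $\Gamma$ and $\sin$ factors. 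By Stirling, each of those auxiliary pieces grows like $\log|s|$ or $|s|$, hence at most polynomially in $T$, so again the ratio tends to $0$.

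The main work, and the reason the statement speaks of \emph{suitable} $T$, lies in the horizontal segments. There I plan to invoke the classical device of choosing $T$ away from the imaginary parts of the non-trivial zeros. Since the von Mangoldt estimate $\mathcal{N}(T+1)-\mathcal{N}(T)=\mathcal{O}(\log T)$ recalled in the introduction shows that at most $\mathcal{O}(\log T)$ zeros have ordinate in $[T,T+1]$, a pigeonhole argument produces, inside every interval $[n,n+1]$, a value $T_{n}$ whose distance to every $\gamma$ is at least of order $1/\log n$. For such a $T_{n}$ the standard partial-fraction representation of $\zeta'/\zeta$ coming from the Hadamard product gives the uniform bound $|\zeta'/\zeta(\sigma+iT_{n})|=\mathcal{O}(\log^{2}T_{n})$ on any fixed vertical strip containing the critical strip, while for $\sigma$ far to the right the Dirichlet series yields a constant bound and for $\sigma$ far to the left the functional-equation argument of the previous paragraph yields a polynomial-in-$T_{n}$ bound.

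The expected main obstacle is precisely this horizontal estimate: the $\log^{2}T_{n}$ bound is non-trivial and is what compels the qualifier ``for suitable numbers $T$'' in the statement, because one must carefully avoid the ordinates of the zeros. Once that classical bound is granted, the remaining sides follow routinely from the Dirichlet series and the functional equation, and the three limits entering the hypothesis of Lemma \ref{lema-cota-3} all vanish for every $c>0$.
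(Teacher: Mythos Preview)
Your proposal is correct and follows essentially the same route as the paper: a uniform $O(1)$ bound on the right vertical side via the Dirichlet series $-\sum \Lambda(n)n^{-w}$, a polynomial bound on the left side via the logarithmic derivative of the functional equation together with Stirling, and on the horizontal sides the classical $O(\log^{2}T)$ estimate for $\zeta'/\zeta$ on a suitable ordinate $T$ in each unit interval, extended outside the critical strip by the previous two devices. The paper simply quotes these three bounds from the literature (Bordell\`es for the $O(\log^{2}T)$ horizontal estimate on $-1<\sigma\le 2$, Ellison for $\sigma\ge 1$, and Montgomery--Vaughan's Lemma~12.4 for $\sigma<-1$, which is precisely the functional-equation bound you sketch), whereas you outline how one would prove them; the logical skeleton is identical.
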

\begin{proof}
It is known that for every real number $T' \geq 2$, there exist $T \in [T', T'+1]$ such that uniformetly for $-1 < \sigma \leq 2$, we have \cite[Corollary 3.90]{bordelles}. 
\[
\left| \frac{\zeta'(\sigma+iT)}{\zeta(\sigma+iT)} \right| = \mathcal{O}(\log^2 T).
\]
We will choose numbers $T$ which satisfy the above condition. In addition, suitable bounds for $\sigma \geq 1$ and $T \geq 8$ are known \cite[Theorem 2.4-(d)]{ellison}:
\[ 
\left| \frac{\zeta'(\sigma+iT)}{\zeta(\sigma+iT)} \right| = \mathcal{O}(\log^9 T).
\]
For $\sigma \geq 2$, we have
\[ 
\left| \frac{\zeta'(\sigma+it)}{\zeta(\sigma+it)} \right| = \left| \sum_{n=1}^{\infty} \frac{\Lambda(n)}{n^{\sigma+it}} \right| \leq \sum_{n=1}^{\infty} \left| \frac{\Lambda(n)}{n^{\sigma+it}} \right|
= \sum_{n=1}^{\infty} \frac{\Lambda(n)}{n^{\sigma}} \leq \sum_{n=1}^{\infty} \frac{\Lambda(n)}{n^2} = \mathcal{O}(1),
\]
and for $\sigma < -1$, we know that
\[
\left| \frac{\zeta'(\sigma+iT)}{\zeta(\sigma+iT)} \right|= \mathcal{O}(\log \sqrt{\sigma^2+T^2}+1),
\]
supposing that circles of radius $1/4$ around the trivial zeros $s=-2k$ of $\zeta(s)$ are excluded \cite[Lemma 12.4]{montgomery}. The lemma follows from these bounds. Observe that the circles we need to exclude justify the choice $S=1/2-2 \lfloor T \rfloor$ in Lemma \ref{lema-cota-3}.
\end{proof}

\begin{lemma}\label{lema-reczeta}
If we assume the Riemann hypothesis, then we can prove that the function $\phi(s)=1/\zeta(s+1/2)$ satisfies the hypothesis of Lemma \ref{lema-cota-3}, for suitable numbers $T$.
\end{lemma}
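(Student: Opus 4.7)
The plan is to verify each of the three decay conditions of Lemma \ref{lema-cota-3} for $\phi(s) = 1/\zeta(s+1/2)$. Setting $w = s + 1/2$, I would treat separately (i) the right vertical where $\operatorname{Re}(w) = 1 + \lfloor T \rfloor$, (ii) the left vertical where $\operatorname{Re}(w) = 1 - 2\lfloor T \rfloor$, and (iii) the two horizontal segments at height $\pm T$. In each case it is enough to bound $|1/\zeta(w)|$ by a quantity that is $o(e^{cT})$ for every $c > 0$.

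Part (i) is immediate: since $\operatorname{Re}(w) \geq 2$, the Dirichlet series gives $|\zeta(w)| \geq 1 - \sum_{n \geq 2} n^{-\operatorname{Re}(w)} \geq 1/2$ uniformly in the imaginary part, so $|1/\zeta(w)|$ is bounded. For (ii) I would use the functional equation $\zeta(w) = \chi(w)\zeta(1-w)$. Because $\operatorname{Re}(1-w) = 2\lfloor T \rfloor \geq 2$, the factor $|\zeta(1-w)|$ is again bounded below by the Dirichlet series, while Stirling applied to $\chi$ (or equivalently to $\Gamma(1-w)$) gives $|\chi(w)| \gg (1+|t|)^{2\lfloor T \rfloor - 1/2}$ for $|t|\geq 1$, so $|1/\zeta(w)|$ decays faster than any $e^{-cT}$. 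For $|t| \leq 1$ the nearest point on the line is the odd negative integer $w_0 = 1 - 2\lfloor T \rfloor$, where the Bernoulli value $\zeta(w_0) = -B_{2\lfloor T \rfloor}/(2\lfloor T \rfloor)$ is factorially large; this is precisely why the choice $S = 1/2 - 2\lfloor T \rfloor$ stays well away from the trivial zeros and keeps $|1/\zeta|$ tiny on the whole segment.

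The real work is in (iii). On a horizontal segment at height $\pm T$ the real part of $w$ sweeps across the critical strip, so I would split it into three zones. For $\operatorname{Re}(w) \geq 2$ the argument of (i) applies. For $\operatorname{Re}(w) \in [1/2, 2]$ the Riemann Hypothesis yields the classical subpolynomial estimate $|1/\zeta(\sigma + iT)| = O(T^{\varepsilon})$ for every $\varepsilon > 0$, valid for $T$ taken from a suitable unbounded set of positive reals (as in Lemma \ref{lema-Zeta}, one may insist on at least one admissible $T$ in every interval $[T', T'+1]$). For $\operatorname{Re}(w) < 1/2$ the functional equation reduces the problem to the preceding bound multiplied by $|1/\chi(w)| \ll (1+T)^{1/2 - \operatorname{Re}(w)}$, which is merely polynomial in $T$ on the segment. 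Picking $T$ in the intersection of the admissible sets furnished by these sub-arguments and by Lemma \ref{lema-Zeta}, the horizontal estimate becomes $|1/\zeta(\sigma + iT)| = O(T^A)$ for some fixed $A$, hence $o(e^{cT})$ for every $c > 0$.

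The main obstacle, and the reason the lemma is conditional, is the control of $|1/\zeta|$ in the critical strip: without RH one has only very weak bounds there, whereas RH provides the uniform subpolynomial estimate needed above for $T$ in an unbounded admissible set. Once this estimate is secured, the three zones of (iii) combine with the vertical pieces of (i) and (ii) exactly as in the proof of Lemma \ref{lema-Zeta}.
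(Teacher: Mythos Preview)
Your proposal is correct and follows essentially the same approach as the paper: both use the Dirichlet series of $1/\zeta$ for $\sigma\ge 2$, the RH-conditional subpolynomial bound $|1/\zeta(\sigma+iT)|=O(T^{\varepsilon})$ in the strip $1/2\le\sigma\le 2$ for suitably chosen $T$, and the functional equation (plus Stirling) to transfer these estimates to the left of the critical line and to the far-left vertical segment. The paper simply packages the left-of-strip step by citing a reference, and adds the unconditional bound $|1/\zeta(\sigma+it)|=O(\log^7 T)$ for $\sigma>1$, whereas you cover that range with the RH estimate; either works. One cosmetic point: in your zone $\operatorname{Re}(w)\ge 2$ the lower bound is $2-\zeta(2)\approx 0.355$ rather than $1/2$, but this does not affect the argument.
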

\begin{proof}
If $\varepsilon$ is any positive number, then assuming the RH it is known that for $1/2 \leq \sigma \leq 2$ each interval $[T',T'+1]$ contains a $T$ such that 
\[
\left| \frac{1}{\zeta(\sigma+iT)} \right| = \mathcal{O}(T^{\varepsilon}),
\]
\cite[eq. 14.16.2]{titch}. 
For $\sigma \geq 2$, we have
\[ 
\left| \frac{1}{\zeta(\sigma+it)} \right| = \left| \sum_{n=1}^{\infty} \frac{\mu(n)}{n^{\sigma+it}} \right| \leq \sum_{n=1}^{\infty} \left| \frac{\mu(n)}{n^{\sigma+it}} \right|
\leq \sum_{n=1}^{\infty} \frac{1}{n^{\sigma}} \leq \sum_{n=1}^{\infty} \frac{1}{n^2} = \mathcal{O}(1),
\]
Hence, from \cite[eq. 2.17]{GeMi}, we get suitable bounds for $-1 \leq \sigma \leq 1/2$ and for $\sigma \leq -1$. For $\sigma > 1$ and $T \geq 8$, we know that
\[
\left| \frac{1}{\zeta(\sigma+it)} \right| = \mathcal{O}(\log^7 T),
\]
\cite[Theorem 2.4-(c)]{ellison}. This bounds prove the lemma.
\end{proof}

\subsection{Series with the Mangoldt function}

\begin{theorem}\label{main-thm}
Let $\Omega = \mathbb{C}-(-\infty,0]$ (the plane with a cut along the real negative axis). We shall denote by $\log z$ the main branch of the $\log$ function defined on $\Omega$ taking $|\arg(z)|<\pi$. We also denote by $z^s=\exp(s\log(z))$, the usual branch of $z^s$ defined also on $\Omega$. For all $z \in \Omega$ we have
\begin{equation}\label{for-main-thm}
\sum_{n=1}^{\infty} \frac{\Lambda(n)z}{\pi \sqrt{n} (z+n)}-\sum_{n=1}^{\infty} \frac{\Lambda(n)}{\pi \sqrt{n} (1+nz)}=\sqrt{z}-\frac{\zeta'(\frac12)}{\pi \zeta (\frac12)}-2\sum_{{\rm Re} \, \tau>0} \frac{\sin (\tau \log z) }{\sinh \pi \tau} + h(z),
\end{equation}
where
\begin{equation}\label{h-of-z}
h(z)=\frac{1}{\sqrt{z}(z^2-1)}-\frac{1}{2z-2}+\frac{\log(8\pi)+C}{\pi} \frac{1}{z+1}+\frac{i}{\pi} \frac{\sqrt{z}}{z+1}\log \frac{\sqrt{z}+i}{\sqrt{z}-i}.
\end{equation}
\end{theorem}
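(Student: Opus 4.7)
My plan is to evaluate the auxiliary integral
\[
\Phi(z):=\frac{1}{2\pi i}\int_{(-1/2)}\frac{\zeta'(s+1/2)}{\zeta(s+1/2)}\cdot\frac{z^s}{\sin\pi s}\,ds,
\]
where $(-1/2)$ is the vertical line $\operatorname{Re}(s)=-1/2$ traversed upward, in two different ways, depending on whether the enclosing rectangle is closed to the right or to the left, and then equate the two resulting representations by analytic continuation. Lemmas \ref{lema-cota-3} and \ref{lema-Zeta} applied to $\phi(s)=\zeta'(s+1/2)/\zeta(s+1/2)$ guarantee that along a suitable sequence $T\to\infty$ the three non-critical sides of the rectangle contribute zero, so $\Phi(z)$ reduces to $\pm$ the sum of enclosed residues (sign depending on orientation).

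For $|z|<1$ I close to the right. The enclosed poles and their residues are: $s=1/2$, contributing $-\sqrt{z}$ (from $\zeta$'s pole at $1$); $s=0$, contributing $\zeta'(1/2)/(\pi\zeta(1/2))$; $s=n\geq 1$, contributing $\frac{(-1)^n z^n}{\pi}\,\zeta'(n+1/2)/\zeta(n+1/2)$, which after expanding $\zeta'(n+1/2)/\zeta(n+1/2)=-\sum_k\Lambda(k)k^{-n-1/2}$ and summing the geometric series in $-z/k$ collapses to $\sum_k\Lambda(k)z/(\pi\sqrt{k}(k+z))$; and $s=i\tau$ from each non-trivial zero, contributing $z^{i\tau}/(i\sinh\pi\tau)$, which upon pairing $\rho\leftrightarrow\bar\rho$ (and, without RH, $\rho\leftrightarrow 1-\bar\rho$) repackages as $2\sin(\tau\log z)/\sinh(\pi\tau)$ summed over $\operatorname{Re}\tau>0$. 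The clockwise orientation introduces an overall minus sign, giving
\[
\Phi(z)=\sqrt{z}-\frac{\zeta'(1/2)}{\pi\zeta(1/2)}-\sum_k\frac{\Lambda(k)z}{\pi\sqrt{k}(k+z)}-2\sum_{\operatorname{Re}\tau>0}\frac{\sin(\tau\log z)}{\sinh\pi\tau}.
\]

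For $|z|>1$ I close to the left. No non-trivial zeros are enclosed (they lie in $-1/2<\operatorname{Re}s<1/2$); only the integer poles at $s=-n$, $n\geq 1$, and the trivial-zero poles at $s=-2k-1/2$, $k\geq 1$. The trivial-zero residues sum geometrically to $-1/(\sqrt{z}(z^2-1))$. At $s=-n$ I substitute for $\zeta'(1/2-n)/\zeta(1/2-n)$ via the logarithmic derivative of the functional equation,
\[
\frac{\zeta'(1/2-s)}{\zeta(1/2-s)}=\log(2\pi)-\psi(s+1/2)+\frac{\pi}{2}\tan\!\left(\frac{\pi s}{2}+\frac{\pi}{4}\right)-\frac{\zeta'(s+1/2)}{\zeta(s+1/2)},
\]
and split the residue sum into three pieces. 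The $\tan$-piece equals $(-1)^n$ at integers and sums to $1/(2(z-1))$; the Dirichlet piece sums, as before, to $-\sum_k\Lambda(k)/(\pi\sqrt{k}(1+kz))$; the $\log(2\pi)-\psi(n+1/2)$ piece is handled by the closed form
\[
\sum_{n\geq 0}\psi(n+1/2)x^n=\frac{-C-2\log 2+\sqrt{x}\,\log\frac{1+\sqrt{x}}{1-\sqrt{x}}}{1-x}
\]
at $x=-1/z$, which, with the branch $\sqrt{-1/z}=i/\sqrt{z}$, produces the remaining $(z+1)^{-1}$ constant and the $\log\frac{\sqrt{z}+i}{\sqrt{z}-i}$ term of $-h(z)$. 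Assembling yields
\[
\Phi(z)=-h(z)-\sum_k\frac{\Lambda(k)}{\pi\sqrt{k}(1+kz)}.
\]

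Each representation of $\Phi$ extends analytically to all of $\Omega$ (the two $\Lambda$-series have their poles at negative reals outside $\Omega$, and $\sum_{\operatorname{Re}\tau>0}\sin(\tau\log z)/\sinh\pi\tau$ converges geometrically since $|\arg z|<\pi$). Equating them rearranges to the stated identity. The main obstacle is the left-hand calculation: carefully tracking signs and branch choices in the functional-equation algebra, recognizing the closed form for the $\psi$-generating function so that the intricate $h(z)$ emerges with exactly the right pieces, and justifying the interchange of the $n$-sum with the Dirichlet expansion of $\zeta'/\zeta$ by absolute convergence in each respective half-plane.
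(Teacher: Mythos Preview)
Your proposal is correct and follows essentially the same route as the paper: the same contour integral of $\dfrac{\zeta'(s+1/2)}{\zeta(s+1/2)}\cdot\dfrac{z^s}{\sin\pi s}$ along $\operatorname{Re}s=-1/2$, the same two residue evaluations (right for $|z|<1$, left for $|z|>1$) justified via Lemmas~\ref{lema-cota-3} and~\ref{lema-Zeta}, the same use of the logarithmic derivative of the functional equation, and then analytic continuation to all of $\Omega$. The only cosmetic difference is that where the paper writes $\psi(n+\tfrac12)=2h_n-C-2\log 2$ and invokes Chen's identity $2\sum_{n\ge1}(-1)^n h_n z^{-n}=i\frac{\sqrt z}{z+1}\log\frac{\sqrt z+i}{\sqrt z-i}$, you use the equivalent packaged generating function $\sum_{n\ge0}\psi(n+\tfrac12)x^n=\frac{-C-2\log2+\sqrt{x}\log\frac{1+\sqrt x}{1-\sqrt x}}{1-x}$ at $x=-1/z$; these are the same computation.
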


\begin{observation}
{\rm This formula is a specialization of \cite[Lemma 1]{Go-Go} taking 
\[ 
h(u)=\frac{\sin(u \log z)}{\sinh(\pi u)}. 
\]
It needs to evaluate
\[
\frac{1}{2\pi} \int_{-\infty}^{\infty} \frac{\sin(u \log z)}{\sinh(\pi u)} {\rm Re} \frac{\Gamma'}{\Gamma} 
\left( \frac14 + \frac{iu}{2} \right) du,
\]
and
\[
\hat{h}(x)=\int_{-\infty}^{\infty} \frac{\sin(u \log z)}{\sinh(\pi u)} e^{-2\pi i x u} du.
\]
However our proof of this particular case is direct and probably simpler.}
\end{observation}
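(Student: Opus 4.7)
The plan is to verify the Observation by explicitly specializing the Gonek-Goncharov lemma \cite[Lemma 1]{Go-Go} to $h(u) = \sin(u\log z)/\sinh(\pi u)$. That lemma expresses a sum $\sum_\tau h(\tau)$ over the non-trivial zeros as the sum of three ingredients: (i) the displayed digamma integral $I(z) = \frac{1}{2\pi}\int {\rm Re}\,(\Gamma'/\Gamma)(\tfrac14 + iu/2)\, h(u)\, du$, (ii) a sum over prime powers with coefficients $\Lambda(n)\hat h((\log n)/(2\pi))/\sqrt n$ coming from the Fourier transform $\hat h$, and (iii) residual terms from the pole of $\zeta$ at $s=1$. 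Verification then amounts to (a) computing $\hat h$ explicitly, (b) computing $I(z)$ explicitly, and (c) checking that the three ingredients match the right-hand side of (\ref{for-main-thm}) together with the function $h(z)$ of (\ref{h-of-z}).

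Step (a) is the easy part. Writing $\sin(u\log z) = (e^{iu\log z}-e^{-iu\log z})/(2i)$ and $e^{-2\pi ixu} = \cos(2\pi xu) - i\sin(2\pi xu)$, the imaginary part of the integrand is odd in $u$ and drops out, leaving
\[
\hat h(x) = \tfrac12\tanh\!\bigl(\tfrac12(\log z + 2\pi x)\bigr) + \tfrac12\tanh\!\bigl(\tfrac12(\log z - 2\pi x)\bigr),
\]
via the classical evaluation $\int_{-\infty}^{\infty}\sin(au)/\sinh(\pi u)\,du = \tanh(a/2)$. Plugging $x = (\log n)/(2\pi)$ and using $\tanh(\tfrac12 \log w) = (w-1)/(w+1)$ turns $\hat h$ into an algebraic combination of $1/(z+n)$ and $1/(1+nz)$; pairing with $\Lambda(n)/\sqrt n$ reproduces the two prime sums on the left of (\ref{for-main-thm}).

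Step (b) is the hard part. The natural tool is contour displacement combined with residues of the digamma factor, whose poles lie at $u = \pm i(\tfrac12+2k)$ for $k \geq 0$ where ${\rm Re}\,\psi(\tfrac14 + iu/2)$ blows up. Closing in the upper or lower half-plane (depending on the sign of ${\rm Re}\log z$, which governs decay of $\sin(u\log z)$ at infinity) picks up a geometric series in $z^{1/2+2k}$ whose sum collapses to the rational pieces $1/[\sqrt z(z^2-1)] - 1/(2z-2)$ of (\ref{h-of-z}). The remaining constant $(\log(8\pi)+C)/[\pi(z+1)]$ should trace back to the value $\psi(\tfrac14) = -C - 3\log 2 - \pi/2$ together with the $\log\pi$ arising naturally in the completed zeta framework, while the branch-cut piece $(i/\pi)(\sqrt z/(z+1))\log((\sqrt z + i)/(\sqrt z - i))$ comes from the reflection identity for $\psi$ relating $\psi(\tfrac14 + iu/2)$ to $\psi(\tfrac34 - iu/2)$.

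The principal obstacle is precisely this step (b): tracking sign conventions, normalisations, and residual boundary terms so that the $\sqrt z - \zeta'(\tfrac12)/[\pi\zeta(\tfrac12)]$ contribution on the right of (\ref{for-main-thm}) emerges correctly from Go-Go's treatment of the pole of $\zeta$ at $s=1$, and so that the various elementary pieces reassemble into the compact form (\ref{h-of-z}). Because this round-trip through the general lemma is lengthier and more fragile than the direct residue argument developed in the remainder of the subsection, the Observation merely records the connection without carrying out the details, and the paper proceeds by the direct route instead.
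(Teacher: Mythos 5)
You should note first that the statement in question is a remark, not a theorem: the paper offers no proof of it whatsoever, merely recording that Theorem \ref{main-thm} could be recovered from \cite[Lemma 1]{Go-Go} (which, incidentally, is Goldston--Gonek, not ``Gonek--Goncharov'') and that the direct residue argument given afterwards is simpler. Your proposal therefore cannot match ``the paper's proof''; instead it partially substantiates the remark, and the part you actually execute is correct and genuinely adds something the paper leaves implicit. Indeed, from $\int_{-\infty}^{\infty} \frac{\sin (au)}{\sinh \pi u}\, du = \tanh \frac{a}{2}$ and $\tanh\left(\frac12 \log w\right) = \frac{w-1}{w+1}$ one gets
\[
\hat{h}\left(\frac{\log n}{2\pi}\right) = \frac{z}{z+n} - \frac{1}{1+nz},
\]
which paired with $\Lambda(n)/(\pi\sqrt{n})$ is exactly the pair of prime sums on the left of (\ref{for-main-thm}); this confirms the Fourier-transform half of the remark's claim, and your closing judgement --- that the round-trip through the general lemma is more laborious than the paper's direct route --- is precisely the point the remark makes.

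There is, however, a concrete flaw in your sketch of step (b). The function $h(u)=\sin(u\log z)/\sinh(\pi u)$ itself has poles at every nonzero $u = ik$, $k \in \mathbb{Z}$, all of which lie inside whichever half-plane you close the contour in; your plan collects only the digamma poles at $u = \pm i\left(\frac12 + 2k\right)$ and would therefore drop an entire family of residues. These are not negligible: in the paper's direct proof the corresponding contributions are the alternating series $\sum_{n\ge1}(-1)^n \psi\left(\frac12+n\right) z^{-n}$, which via Chen's identity is precisely the source of the branch term $\frac{i}{\pi}\frac{\sqrt{z}}{z+1}\log\frac{\sqrt{z}+i}{\sqrt{z}-i}$ and of the $C+\log 4$ portion of the constant in (\ref{h-of-z}); your attribution of that term to the reflection identity for $\psi$ is not correct as stated, and the digamma poles alone account only for geometric-series pieces such as $1/(\sqrt{z}(z^2-1))$. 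Since the remark asserts nothing beyond the correspondence and the need to evaluate the two displayed integrals, this omission does not undermine your conclusion --- but if step (b) were intended as an actual derivation of (\ref{h-of-z}) through the Goldston--Gonek lemma, the missing $\sinh$-pole residues would be a genuine gap.
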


\begin{proof}
Let $I(z)$ be the analytic continuation of the function
\[
I(z)=\frac{1}{2\pi i} \int_{-1/2-i\infty}^{-1/2+i\infty} \frac{\zeta'(s+\frac12)}{\zeta(s+\frac12)} \frac{\pi}{\sin \pi s} z^s ds.
\]
along the vertical axis $x=-1/2$. For evaluating $I$, we first consider rectangles of vertices $A=-1/2+iT$, \, $B=R+iT$, \, $C=R-iT$, \, $D=-1/2-iT$, where $R=1/2+\lfloor T \rfloor$. If we choose suitable real numbers $T$ and use Lemma \ref{lema-Zeta}, we see that for $|z|<1$ the integrals, as $T \to \infty$, along the sides $AB$, $BC$ and $CD$ are equal to zero. By applying the residues theorem, we obtain
\begin{align}
& I={\rm res}_{s=\frac12} \left( \frac{\zeta'(s+\frac12)}{\zeta(s+\frac12)} \frac{\pi}{\sin \pi s} z^s \right)+\sum_{n=0}^{\infty} {\rm res}_{s=n} \left( \frac{\zeta'(s+\frac12)}{\zeta(s+\frac12)} \frac{\pi}{\sin \pi s} z^s \right) +  \nonumber  \\
& \sum_{\rho} {\rm res}_{s=\rho-\frac12} \left( \frac{\zeta'(s+\frac12)}{\zeta(s+\frac12)} \frac{\pi}{\sin \pi s} z^s \right) \!=\! - \pi \sqrt{z}+\sum_{n=0}^{\infty} (-1)^n \frac{\zeta'(n+\frac12)}{\zeta(n+\frac12)}z^n+\pi\sum_{\rho} \frac{z^{\rho-\frac12}}{\sin \pi (\rho-\frac12)}. \nonumber
\end{align}
By analytic continuation, we have that for all $z \in \Omega$
\begin{equation}\label{int-right}
I = - \pi \sqrt{z}+\frac{\zeta'(\frac12)}{\zeta(\frac12)}+\sum_{n=1}^{\infty} \frac{\Lambda(n) z}{\sqrt{n}(z+n)} + \pi \sum_{\rho} \frac{z^{\rho-\frac12}}{\sin \pi (\rho-\frac12)}
\end{equation}
In a similar way, for $|z|>1$ we integrate along the rectangles of vertices $A=-1/2+iT$, \, $B'=S+iT$, \, $C'=S-iT$, \, $D=-1/2-iT$, where $S=1/2-2 \lfloor T \rfloor$. By Lemma \ref{lema-Zeta} we see that when $T \to +\infty$, the integrals along the sides $AB'$, $B'C'$ and $C'D$ are equal to zero. Hence, by the residues method, we deduce that
\begin{align}
I &= \sum_{n=1}^{\infty} {\rm res}_{s=-2n-\frac12} \left( \frac{\zeta'(s+\frac12)}{\zeta(s+\frac12)} \frac{\pi}{\sin \pi s} z^s \right) + \sum_{n=1}^{\infty} {\rm res}_{s=-n} \left( \frac{\zeta'(s+\frac12)}{\zeta(s+\frac12)} \frac{\pi}{\sin \pi s} z^s \right) \nonumber \\
&= \sum_{n=1}^{\infty} \frac{\zeta'(-2n)}{\zeta(-2n)} \sin(2 \pi n) z^{-2n-\frac12}+\sum_{n=1}^{\infty} (-1)^n \frac{\zeta'(\frac12-n)}{\zeta(\frac12-n)}z^{-n}, \label{lastsums}
\end{align}
where we understand the expression inside the first sum of (\ref{lastsums}) as a limit based on the identity
\[ 
\lim_{s \to -2n} (s+2n) \frac{\zeta'(s)}{\zeta(s)} = \lim_{s \to -2n} \frac{\zeta'(s)}{\zeta(s)}  \frac{\pi (s+2n)}{\sin \pi s} \frac{\sin \pi s}{\pi} = \lim_{s \to -2n} \frac{\sin{\pi s}}{\pi} \frac{\zeta'(s)}{\zeta(s)}.
\] 
We use the functional equation (which comes easily from the functional equation of $\zeta(s)$):
\begin{equation}\label{zpz}
\frac{\zeta'(1-s)}{\zeta(1-s)}=\log 2\pi-\psi(s)-\frac{\pi}{2}\tan \frac{\pi s}{2}-\frac{\zeta'(s)}{\zeta(s)}.
\end{equation}
to simplify the sums in (\ref{lastsums}). For the first sum in (\ref{lastsums}), we obtain 
\[
\sum_{n=1}^{\infty} \frac{\zeta'(-2n)}{\zeta(-2n)} \sin(2\pi n) z^{-2n-\frac12}=\pi \sum_{n=1}^{\infty} z^{-2n-\frac12},
\]
and for the last sum in (\ref{lastsums}), we have
\[
-\log 2 \pi \sum_{n=1}^{\infty} (-1)^n z^{-n}+\sum_{n=1}^{\infty} (-1)^n \psi \left(\frac12+n \right)z^{-n}-\frac{\pi}{2}\sum_{n=1}^{\infty} z^{-n}+\sum_{n=1}^{\infty} (-1)^n \frac{\zeta'(n+\frac12)}{\zeta(n+\frac12)}z^{-n},
\]
where $\psi$ is the digamma function, which satisfies the property
\[
\psi \left(\frac12+n \right) = 2 h_n - C - 2 \log 2, \qquad h_n = \sum_{j=1}^n \frac{1}{2j-1}.
\]
Using the identity, due to Hongwei Chen \cite[p.299, exercise 34]{bailey-et-al}
\[
2 \sum_{n=1}^{\infty} (-1)^n h_n z^{-n} = i \frac{\sqrt{z}}{z+1} \log \frac{\sqrt{z}+i} {\sqrt{z}-i},
\]
we get that for $|z|>1$
\[
I = \frac{\pi}{\sqrt{z}(z^2-1)}+\frac{\log 2 \pi}{z+1}+\frac{C+\log 4}{z+1}-\frac{\pi}{2z-2} +  i \frac{\sqrt{z}}{z+1} \log \frac{\sqrt{z}+i} {\sqrt{z}-i}+\sum_{n=1}^{\infty} (-1)^n \frac{\zeta'(n+\frac12)}{\zeta(n+\frac12)}z^{-n}
\]
Then, by analytic continuation, we obtain that for all $z \in \Omega$:
\begin{equation}\label{int-left}
I=\frac{\pi}{\sqrt{z}(z^2-1)}+\frac{C+\log 8\pi}{z+1}-\frac{\pi}{2z-2} +  i \frac{\sqrt{z}}{z+1} \log \frac{\sqrt{z}+i} {\sqrt{z}-i}+\sum_{n=1}^{\infty} \frac{\Lambda(n)}{\sqrt{n}(1+zn)}.
\end{equation}
By identifying (\ref{int-right}) and (\ref{int-left}), and observing that the pole at $z=1$ is removable, we can complete the proof.
\end{proof}

\begin{corollary}
For $x>0$, the function $h(z)$ in (\ref{h-of-z}) becomes
\[
h(x)=\frac{1}{\sqrt{x}(x^2-1)}-\frac{1}{2x-2}+\frac{\log(8\pi)+C}{\pi} \frac{1}{x+1}-\frac{2}{\pi} \frac{\sqrt{x}}{x+1} \arctan \frac{1}{\sqrt{x}}.
\]
\end{corollary}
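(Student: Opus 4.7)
The only task is to specialize the formula (\ref{h-of-z}) to real positive $z=x$, which amounts to simplifying the sole complex-looking term
\[
\frac{i}{\pi} \frac{\sqrt{x}}{x+1}\log \frac{\sqrt{x}+i}{\sqrt{x}-i},
\]
since the other three summands in $h(z)$ are already real when $z=x>0$. My plan is to show that this term equals $-\tfrac{2}{\pi}\tfrac{\sqrt{x}}{x+1}\arctan(1/\sqrt{x})$ by computing the principal logarithm of $(\sqrt{x}+i)/(\sqrt{x}-i)$ directly.

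Set $y=\sqrt{x}>0$. First I observe that $|y+i|=|y-i|=\sqrt{y^{2}+1}$, so the quotient $(y+i)/(y-i)$ lies on the unit circle and its principal logarithm is purely imaginary. Next I compute its argument: since $y>0$, both $y+i$ and $y-i$ lie in the right half plane, so $\arg(y+i)=\arctan(1/y)$ and $\arg(y-i)=-\arctan(1/y)$, both in $(-\pi/2,\pi/2)$. Hence
\[
\arg\frac{y+i}{y-i}=2\arctan\frac{1}{y}\in(-\pi,\pi),
\]
so the principal branch gives
\[
\log\frac{y+i}{y-i}=2i\arctan\frac{1}{y}.
\]
Multiplying by $i$ produces $-2\arctan(1/y)$, and substituting $y=\sqrt{x}$ delivers
\[
\frac{i}{\pi} \frac{\sqrt{x}}{x+1}\log \frac{\sqrt{x}+i}{\sqrt{x}-i}
=-\frac{2}{\pi}\frac{\sqrt{x}}{x+1}\arctan\frac{1}{\sqrt{x}}.
\]
Assembling this with the unchanged first three terms of (\ref{h-of-z}) yields the claimed formula.

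There is no genuine obstacle here; the computation is essentially a one-line argument with the principal branch of the logarithm, and the only point requiring care is checking that $2\arctan(1/\sqrt{x})$ indeed lies in $(-\pi,\pi)$ so that the principal branch applies directly without a $2\pi$ correction — which it does, since $\arctan(1/\sqrt{x})\in(0,\pi/2)$ for $x>0$.
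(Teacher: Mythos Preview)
Your proof is correct and follows essentially the same approach as the paper: both compute the principal logarithm of $(\sqrt{x}+i)/(\sqrt{x}-i)$ by writing the numerator and denominator in polar form (modulus $\sqrt{x+1}$, arguments $\pm\arctan(1/\sqrt{x})$), obtaining $\log\frac{\sqrt{x}+i}{\sqrt{x}-i}=2i\arctan(1/\sqrt{x})$. Your explicit check that the resulting argument lies in $(-\pi,\pi)$ is a nice touch that the paper leaves implicit.
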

\begin{proof}
For $x>0$ we have
\[
\sqrt{x}+i=\sqrt{x+1} \, \exp \left(i \arctan \frac{1}{\sqrt{x}} \right), \quad 
\sqrt{x}-i=\sqrt{x+1} \, \exp \left(-i \arctan \frac{1}{\sqrt{x}} \right).
\]
Hence
\[
\frac{i}{\pi} \frac{\sqrt{x}}{x+1} \log \frac{\sqrt{x}+i}{\sqrt{x}-i}=\frac{-2}{\pi} 
\frac{\sqrt{x}}{x+1} \arctan \frac{1}{\sqrt{x}},
\]
and the formula follows.
\end{proof}

\begin{corollary}
Replacing $z$ with the real number $x$ in (\ref{for-main-thm}), multiplying by $\sqrt{x}$ and taking the limit as $x \to 0^{+}$, we see that
\begin{equation}
\lim_{x \to 0^{+}} \sum_{n=1}^{\infty} \frac{\Lambda(n) \sqrt{x}}{\sqrt{n} (1+nx)}=
\lim_{x \to +\infty} \sum_{n=1}^{\infty} \frac{\Lambda(n) \sqrt{x}}{\sqrt{n} (x+n)}=\pi.
\end{equation}
\end{corollary}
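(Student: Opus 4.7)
The plan is to specialise identity~(\ref{for-main-thm}) to $z=x>0$, multiply through by $\sqrt{x}$, and then pass to the limit $x\to 0^{+}$ as suggested by the statement. Before doing this, observe that the second limit in the statement reduces to the first via the substitution $y=1/x$, since a direct computation gives
\[
\sum_{n=1}^{\infty}\frac{\Lambda(n)\sqrt{x}}{\sqrt{n}(x+n)}=\sum_{n=1}^{\infty}\frac{\Lambda(n)\sqrt{y}}{\sqrt{n}(1+ny)},
\]
with $y\to 0^{+}$ as $x\to+\infty$. So only the first equality requires work.

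After scaling~(\ref{for-main-thm}) by $\sqrt{x}$, I examine each term. On the left-hand side, the first sum equals $\frac{1}{\pi}\sum_{n}\Lambda(n)\,x\sqrt{x}/(\sqrt{n}(x+n))$; bounding $x/(x+n)\le x/n$ majorises this termwise by $\Lambda(n)x^{3/2}/(\pi n^{3/2})$, which is summable and of order $O(x^{3/2})\to 0$. On the right-hand side, the summands $x$ and $\sqrt{x}\,\zeta'(\frac12)/(\pi\zeta(\frac12))$ vanish trivially, and the explicit real form of $h$ from the preceding corollary gives
\[
\sqrt{x}\,h(x)=\frac{1}{x^{2}-1}-\frac{\sqrt{x}}{2x-2}+\frac{(\log 8\pi+C)\sqrt{x}}{\pi(x+1)}-\frac{2x}{\pi(x+1)}\arctan\frac{1}{\sqrt{x}},
\]
of which only the first piece contributes in the limit, so $\sqrt{x}\,h(x)\to -1$.

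The delicate part is the series over non-trivial zeros $2\sqrt{x}\sum_{\mathrm{Re}\,\tau>0}\sin(\tau\log x)/\sinh\pi\tau$. Writing $\tau=\gamma+iv$ with $v=\frac12-\beta$, the elementary identity $|\sin(\tau\log x)|^{2}=\sin^{2}(\gamma\log x)+\sinh^{2}(v\log x)$ yields $|\sin(\tau\log x)|\le\cosh(v\log x)\le x^{-|v|}$ for $x\in(0,1]$, so each summand is bounded by $x^{1/2-|v|}/|\sinh\pi\tau|\le 1/|\sinh\pi\tau|$. By Hadamard--de la Vallée Poussin every non-trivial zero satisfies $0<\beta<1$, whence $|v|<\frac12$ strictly; consequently each summand itself tends to zero, and since $|\sinh\pi\tau|\sim\frac12 e^{\pi\gamma}$ while the zeros grow only like $T\log T$, the series $\sum 1/|\sinh\pi\tau|$ converges absolutely. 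Dominated convergence then sends the whole sum to zero.

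Assembling the five limits collapses the scaled identity to $-L/\pi=-1$, where $L$ is the limit to be computed, and therefore $L=\pi$. The main obstacle is the dominated-convergence control of the sum over zeros: it succeeds precisely because the classical zero-free region forces $|v|<\frac12$ strictly for every non-trivial zero, which is exactly what makes $x^{1/2-|v|}$ both bounded on $(0,1]$ and vanishing in the limit.
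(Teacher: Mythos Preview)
Your proof is correct and follows exactly the route the paper indicates; indeed the paper offers no argument beyond the instruction in the statement itself, so you have simply supplied the details that the author leaves to the reader. In particular, your dominated-convergence treatment of the sum over zeros---bounding $\sqrt{x}\,|\sin(\tau\log x)|$ by $x^{1/2-|v|}\le 1$ via the Hadamard--de~la~Vall\'ee~Poussin bound $|v|<\tfrac12$, and noting that $\sum 1/|\sinh\pi\tau|$ converges---is the natural way to justify the limiting step that the paper passes over in silence.
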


\begin{corollary}
For $x$ real sufficiently large the inequality
\begin{equation}\label{ineq}
\left| \sum_{n=1}^{\infty} \frac{\Lambda(n)x}{\pi \sqrt{n} (x+n)}-\sqrt{x}+
\frac{\zeta' (\frac12)}{\pi\zeta (\frac12)} \right| < 10^{-18},
\end{equation}
holds in case that the Riemann Hypothesis is true.
\end{corollary}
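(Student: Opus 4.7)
The plan is to specialise the main formula (\ref{for-main-thm}) to real $z=x>0$ and then bound each piece of the resulting expression. After moving $\sqrt{x}$ and $\zeta'(\tfrac12)/(\pi\zeta(\tfrac12))$ across, the quantity inside the absolute value in (\ref{ineq}) becomes
\[
\sum_{n=1}^{\infty}\frac{\Lambda(n)}{\pi\sqrt{n}\,(1+nx)} \;-\; 2\sum_{\mathrm{Re}\,\tau>0}\frac{\sin(\tau\log x)}{\sinh\pi\tau} \;+\; h(x),
\]
so it suffices to bound these three terms separately.

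The first series is trivially $O(1/x)$: using $1+nx\ge nx$,
\[
\sum_{n=1}^{\infty}\frac{\Lambda(n)}{\pi\sqrt{n}\,(1+nx)} \leq \frac{1}{\pi x}\sum_{n=1}^{\infty}\frac{\Lambda(n)}{n^{3/2}} = -\frac{\zeta'(3/2)}{\pi\,\zeta(3/2)}\cdot\frac{1}{x}.
\]
For $h(x)$, a short expansion of (\ref{h-of-z}) (using $\arctan(1/\sqrt{x})=x^{-1/2}+O(x^{-3/2})$) shows $h(x)=O(1/x)$ as well. Both of these contributions can therefore be made arbitrarily small by taking $x$ large enough.

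The delicate piece is the sum over zeros. Under RH every $\tau>0$ is real, which gives $|\sin(\tau\log x)|\le 1$ and hence
\[
\Bigl|\,2\sum_{\tau>0}\frac{\sin(\tau\log x)}{\sinh\pi\tau}\,\Bigr| \leq 2\sum_{\tau>0}\frac{1}{\sinh\pi\tau}.
\]
Since $\sinh(\pi\tau)\sim\tfrac12 e^{\pi\tau}$, the bound is essentially the first term: $\tau_1\approx 14.1347$ gives $2/\sinh(\pi\tau_1)\approx 2\times 10^{-19}$, $\tau_2\approx 21.02$ gives $\approx 8\times 10^{-29}$, and the tail (controlled by pairing $\mathcal{N}(T)=O(T\log T)$ against the exponential weight $e^{-\pi\tau}$) is negligible. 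Thus the zero-sum is bounded, uniformly in $x$, by some constant $K<3\times 10^{-19}$.

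Combining, the quantity in (\ref{ineq}) is at most $K+O(1/x)$, which falls below $10^{-18}$ for all sufficiently large $x$. The only step requiring care is the explicit numerical bound on the zero-sum: one must know $\tau_1$ to enough decimals to confirm $2/\sinh(\pi\tau_1)$ is comfortably below $10^{-18}$, and verify that the exponentially decaying tail leaves enough slack to absorb the $O(1/x)$ terms at a practical threshold for $x$. Everything else is routine asymptotic bookkeeping.
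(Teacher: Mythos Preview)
Your argument is correct and follows exactly the paper's approach: rearrange (\ref{for-main-thm}) so that the left side of (\ref{ineq}) equals the second $\Lambda$-sum plus $h(x)$ minus the zero-sum, note that the first two pieces are $O(1/x)$, and bound the zero-sum under RH by $2\sum_{\tau>0}1/\sinh\pi\tau<10^{-18}$ using the numerical value of $\tau_1$. The paper's proof is a terse two-line version of what you wrote; your added detail (the explicit $O(1/x)$ bounds and the numerical check $2/\sinh(\pi\tau_1)\approx 2\times 10^{-19}$) is all accurate.
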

\begin{proof}
The function $h(x)$ and the second sum, replacing $z$ with $x$, of the left side of (\ref{h-of-z}) tend to $0$ as $x \to + \infty$. On the other hand, if we assume the RH then the sum over the zeros of zeta in (\ref{h-of-z}) is of order less than $10^{-18}$ for all $x>0$. 
\end{proof}

\begin{theorem}
The following identity
\begin{equation}\label{main2-thm}
\sum_{{\rm Re} \, \tau>0} \frac{\sinh z \tau}{\sinh \pi \tau} - \sum_{n=1}^{\infty} \frac{\Lambda(n)}{2\pi \sqrt{n}} \left(\frac{i e^{iz}}{e^{iz}+n}-\frac{i e^{-iz}}{e^{-iz}+n} \right) = f(z),
\end{equation}
where
\[
f(z)=\sin \frac{z}{2} - \frac{1}{8}\tan \frac{z}{4} -\frac{C+\log 8\pi}{4\pi}\tan \frac{z}{2} - \frac{1}{4\pi \cos \frac{z}{2}} \log \frac{1-\tan \frac{z}{4}}{1+\tan \frac{z}{4}},
\]
holds for $|{\rm Re}(z)|<\pi$. If in addition $|{\rm Im}(z)|<\log 2$, then we have
\begin{equation}\label{main2-cor}
\sum_{{\rm Re} \, \tau>0} \frac{\sinh z \tau}{\sinh \pi \tau} + \frac{1}{\pi} \sum_{n=1}^{\infty} (-1)^n \frac{\zeta'(n+\frac12)}{\zeta(n+\frac12)}\sin nz = f(z).
\end{equation}
\end{theorem}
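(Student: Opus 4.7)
The plan is to derive (\ref{main2-thm}) from Theorem \ref{main-thm} by applying (\ref{for-main-thm}) at $z=e^{iw}$ and $z=e^{-iw}$ and combining the two instances. The key observation is the identity
\[
\frac{1}{1+ne^{iw}} = \frac{e^{-iw}}{e^{-iw}+n},
\]
which causes the two Mangoldt-type sums on the left of (\ref{for-main-thm}) to swap under this pair of substitutions. Writing $G(z)=\sum_n\Lambda(n)z/(\pi\sqrt{n}(z+n))$ and $C_0=\zeta'(1/2)/(\pi\zeta(1/2))$, and using $\log e^{iw}=iw$ and $\sin(i\tau w)=i\sinh(\tau w)$ (valid for $|{\rm Re}(w)|<\pi$, which keeps $e^{\pm iw}\in\Omega$), the substitution $z=e^{iw}$ in (\ref{for-main-thm}) produces
\[
G(e^{iw})-G(e^{-iw}) = e^{iw/2}-C_0-2i\sum_{{\rm Re}\,\tau>0}\frac{\sinh(\tau w)}{\sinh\pi\tau}+h(e^{iw}),
\]
while $z=e^{-iw}$ gives a companion equation in which the left side, the sign of the zeros sum, $e^{iw/2}$, and $h(e^{iw})$ are flipped. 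Subtracting the second from the first cancels $C_0$, combines $e^{iw/2}-e^{-iw/2}$ into $2i\sin(w/2)$, and doubles both the zeros sum and $h(e^{iw})-h(e^{-iw})$. After dividing by $2$ and multiplying through by $i/2$, the quantity $(i/2)[G(e^{iw})-G(e^{-iw})]$ matches exactly the Mangoldt sum of (\ref{main2-thm}), and one obtains
\[
\sum_{{\rm Re}\,\tau>0}\frac{\sinh(w\tau)}{\sinh\pi\tau}-\sum_{n=1}^{\infty}\frac{\Lambda(n)}{2\pi\sqrt{n}}\left(\frac{ie^{iw}}{e^{iw}+n}-\frac{ie^{-iw}}{e^{-iw}+n}\right) = \frac{\sin(w/2)}{2}-\frac{i}{4}\bigl[h(e^{iw})-h(e^{-iw})\bigr].
\]

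It then remains to show that the right side equals $f(w)$. I would compute $h(e^{iw})-h(e^{-iw})$ summand by summand using $e^{iw}-1=2ie^{iw/2}\sin(w/2)$, $e^{iw}+1=2e^{iw/2}\cos(w/2)$, and $e^{2iw}-1=2ie^{iw}\sin w$; the first three pieces of $h$ contribute $-i\cos(3w/2)/\sin w$, $(i/2)\cot(w/2)$, and $-i(C+\log 8\pi)\tan(w/2)/\pi$, respectively. The logarithmic fourth summand is the most delicate: writing $u=e^{iw/2}$ on the unit circle, the arguments $(u+i)/(u-i)$ and $(u^{-1}+i)/(u^{-1}-i)$ are reciprocals of complex conjugates, so the log-difference collapses to $\log|A|^2$ with $|A|^2=(1+\sin(w/2))/(1-\sin(w/2))$; the half-angle identity $(1+\sin\theta)/(1-\sin\theta)=((1+\tan(\theta/2))/(1-\tan(\theta/2)))^2$ then rewrites this as $-2\log((1-\tan(w/4))/(1+\tan(w/4)))$. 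Matching the non-logarithmic pieces with $f(w)$ then reduces to the elementary identities $\cot(w/2)+\tan(w/4)=\csc(w/2)$ and $\cos(3w/2)=(2\cos w-1)\cos(w/2)$.

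For the series form (\ref{main2-cor}), I would first establish, for $|z|<1$,
\[
\sum_{n=1}^{\infty}\frac{\Lambda(n)z}{\sqrt{n}(z+n)} = \sum_{n=1}^{\infty}(-1)^n\frac{\zeta'(n+1/2)}{\zeta(n+1/2)}z^n,
\]
by expanding $\zeta'(n+1/2)/\zeta(n+1/2)=-\sum_k\Lambda(k)/k^{n+1/2}$ and interchanging summation (this identification is already implicit in the computation of (\ref{int-right})). Because $\zeta'(s)/\zeta(s)\sim-\log 2/2^s$ as $\sigma\to\infty$, the right-hand series has radius of convergence $2$, so the identity extends to $|z|<2$ by analytic continuation. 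Substituting $z=e^{\pm iw}$ is legitimate when $|e^{\pm iw}|<2$, i.e.\ $|{\rm Im}(w)|<\log 2$, and subtracting the two instances transforms the Mangoldt sum in (\ref{main2-thm}) into $-(1/\pi)\sum_{n\geq 1}(-1)^n(\zeta'(n+1/2)/\zeta(n+1/2))\sin(nw)$, producing (\ref{main2-cor}).

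The main obstacle will be the trigonometric simplification of $h(e^{iw})-h(e^{-iw})$, especially the logarithmic summand where recognizing the reciprocal-complex-conjugate structure and invoking the correct half-angle identity is essential; once that step is cleared, matching the output with $f(w)$ and deriving (\ref{main2-cor}) are matters of routine bookkeeping.
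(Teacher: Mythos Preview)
Your proposal is correct and follows essentially the same route as the paper: both exploit the antisymmetry of (\ref{for-main-thm}) under $z\mapsto 1/z$ (the paper phrases this as $H(z)=-H(z^{-1})$ and simplifies $H(z)=\tfrac12(H(z)-H(z^{-1}))$ before substituting $z=e^{iw}$, whereas you substitute $z=e^{\pm iw}$ first and then subtract), and both reduce the result to the closed form $f$ by the same trigonometric identities. One small remark: your treatment of the logarithmic summand via the ``reciprocal of complex conjugate'' trick assumes $|u|=1$, i.e.\ real $w$, so you should note that the resulting identity between analytic functions then extends to the full strip $|{\rm Re}\,w|<\pi$ by analytic continuation.
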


\begin{proof}
Let 
\[
H(z)=\sqrt{z}-\frac{\zeta'(\frac12)}{\pi \zeta(\frac12)}+h(z).
\]
That is
\[
H(z)=\sqrt{z}-\frac{\zeta'(\frac12)}{\pi \zeta(\frac12)}+\frac{1}{\sqrt{z}(z^2-1)}-\frac{1}{2z-2}+\frac{\log(8\pi)+C}{\pi} \frac{1}{z+1}+\frac{i}{\pi} \frac{\sqrt{z}}{z+1}\log \frac{\sqrt{z}+i}{\sqrt{z}-i}.
\]
From (\ref{for-main-thm}), we see that the function $H(z)$ has the property $H(z)=-H(z^{-1})$. Hence
\begin{align}
H(z)=\frac{H(z)-H(z^{-1})}{2} =& \frac12 \left( \sqrt{z}-\frac{1}{\sqrt{z}} \right)
+ \frac12 \frac{1}{z^2-1} \left( \frac{1}{\sqrt{z}}+z^2 \sqrt{z} \right) - \frac14 \frac{z+1}{z-1} \nonumber \\ &- \frac{\log 8 \pi +C}{2\pi} \frac{z-1}{z+1} + \frac{i}{\pi} \frac{\sqrt{z}}{z+1} 
\log \frac{i\sqrt{z}-1}{i\sqrt{z}+1}+\frac12 \frac{\sqrt{z}}{z+1}. \label{H-of-z}
\end{align}
When $|{\rm Re}(z)|<\pi$ we have $e^{iz} \in \Omega$ so that, we may put $e^{iz}$ instead of $z$ in Theorem \ref{main-thm}. If in addition we multiply by $-i/2$, we get
\[
\sum_{{\rm Re} \, \tau>0} \frac{\sinh z \tau}{\sinh \pi \tau} - \sum_{n=1}^{\infty} \frac{\Lambda(n)}{2 \pi \sqrt{n}} \left(\frac{i e^{iz}}{e^{iz}+n}-\frac{i e^{-iz}}{e^{-iz}+n} \right)=-\frac{i}{2} H(e^{iz}).
\]
From (\ref{H-of-z}), we have
\begin{align}
-\frac{i}{2}H(e^{iz}) =& -\frac{i}{4} \left( e^{iz/2}-e^{-iz/2} \right) 
-\frac{i}{4} \left( \frac{e^{-iz/2}}{e^{2iz}-1}-\frac{e^{iz/2}}{e^{-2iz}-1}  \right)
+\frac{i}{8} \frac{e^{iz}+1}{e^{iz}-1} \nonumber \\
&+\frac{i}{4} \frac{\log 8\pi + C}{\pi} \frac{e^{iz}-1}{e^{iz}+1}+
\frac{1}{2\pi} \frac{e^{iz/2}}{e^{iz}+1} \log \frac{e^{i\frac{z+\pi}{2}}-1}{e^{i \frac{z+\pi}{2}}+1}
-\frac{i}{4} \frac{e^{iz/2}}{e^{iz}+1},
\nonumber
\end{align}
which we can write as
\begin{align}
-\frac{i}{2}H(e^{i z}) =& -\frac{i}{4} \left( e^{i z/2}-e^{-i z/2} \right) 
-\frac{i}{4} \frac{e^{3i z/2}+e^{-3i z/2}}{e^{i z}-e^{-i z}} \nonumber \\
&+\frac{i}{8} \frac{e^{i z/2}+e^{-i z/2}}{e^{i z/2}-e^{-i z/2}} 
+\frac{i}{4} \frac{\log 8\pi + C}{\pi} \frac{e^{i z/2}-e^{-i z/2}}{e^{i z/2}+e^{-i z/2}}  \nonumber \\
&+\frac{1}{2\pi} \frac{1}{e^{i z/2}+e^{-iz/2}} \log \frac{e^{i\frac{z+\pi}{4}}-e^{-i\frac{z+\pi}{4}}}{e^{i \frac{z+\pi}{4}}+e^{-i\frac{z+\pi}{4}}}-\frac{i}{4} \frac{1}{e^{iz/2}+e^{-iz/2}},
\nonumber
\end{align}
which simplifies to
\begin{align}
-\frac{i}{2}H(e^{iz}) =& \frac12 \sin \frac{z}{2}-\frac14 \frac{\cos \left(z+\frac{z}{2}\right)}{\sin z}+\frac18 \cot \frac{z}{2}-\frac{\log 8\pi + C}{4\pi} \tan \frac{z}{2} \nonumber \\ &+ \frac{1}{4\pi} \frac{1}{\cos \frac{z}{2}} \log \left(i \tan \frac{z+\pi}{4} \right)-\frac{i}{8} \frac{1}{\cos \frac{z}{2}}. 
\nonumber
\end{align}
Using elementary trigonometric formulas we arrive at (\ref{main2-thm}). Finally, as
\[ 
\frac{\zeta'(n+\frac12)}{\zeta(n+\frac12)}=-\frac{\Lambda(2)}{\sqrt{2}}  \, 2^{-n}+\mathcal{O}(3^{-n}),
\]
we see that the expression in (\ref{main2-cor}) is convergent for $|{\rm Im} \, z|<\log 2$.
\end{proof}

\begin{example}
Differentiating (\ref{main2-cor}) with respect to $z$ at $z=0$, we get
\begin{equation}
\sum_{n=1}^{\infty} (-1)^n n \frac{\zeta'(n+\frac12)}{\zeta(n+\frac12)} = -\frac38 \log 2 - \frac18 \log \pi + \frac{15}{32} \pi - \frac18 C + \frac18 - \sum_{{\rm Re} \, \tau>0} \frac{\pi \tau}{\sinh \pi \tau}.
\end{equation}
\end{example}

\begin{corollary}
Integrating (\ref{main2-thm}) with respect to $z$, we get 
\begin{equation}\label{formula3}
\sum_{{\rm Re} \, \tau>0} \frac{\cosh z \tau -1}{\tau \sinh \pi \tau}-\sum_{n=1}^{\infty} \frac{\Lambda(n)}{2\pi\sqrt{n}}\left( \log \frac{e^{i z}+n}{1+n} +\log \frac{e^{-i z}+n}{1+n} \right)=g(z),
\end{equation}
where
\[
g(z)=2-2\cos \frac{z}{2}+\frac12 \log \cos \frac{z}{4}+\frac{C+\log 8\pi}{2\pi} \log \cos \frac{z}{2}+\frac{1}{4\pi} \log^2 \left( \frac{1-\tan \frac{z}{4}}{1+\tan \frac{z}{4}} \right),
\]
valid for $|Re(z)|<\pi$. Integrating (\ref{main2-cor}) with respect to $z$, we get
\begin{equation}\label{for3}
\sum_{\tau>0} \frac{\cosh z \tau -1}{\tau \sinh \pi \tau}-\frac{1}{\pi} \sum_{n=1}^{\infty} (-1)^n \frac{\zeta'(n+\frac12)}{\zeta(n+\frac12)} \frac{\cos nz - 1}{n}=g(z),
\end{equation}
valid for $|{\rm Re}(z)|<\pi$ and $|{\rm Im}(z)|<\log 2$.
\end{corollary}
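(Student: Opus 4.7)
The plan is to obtain (\ref{formula3}) and (\ref{for3}) by term-by-term integration of (\ref{main2-thm}) and (\ref{main2-cor}) along any path from $0$ to $z$ inside the domain of validity; since every function involved is analytic in the indicated region, the path-independence is automatic. On the left-hand side of (\ref{main2-thm}) the antiderivatives of the individual terms are immediate: $\int_0^z \sinh(w\tau)/\sinh(\pi\tau)\,dw = (\cosh(z\tau)-1)/(\tau\sinh(\pi\tau))$, and for each fixed $n$ the identity $\frac{d}{dw}\log(e^{\pm iw}+n)=\pm i e^{\pm iw}/(e^{\pm iw}+n)$ yields, after combining the two pieces of the $n$-th Mangoldt term, exactly the expression $\log\bigl((e^{iz}+n)/(1+n)\bigr)+\log\bigl((e^{-iz}+n)/(1+n)\bigr)$ that appears in (\ref{formula3}). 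The normalisation at $z=0$ kills every constant of integration, which is why the pair $\cosh(z\tau)-1$ (not $\cosh(z\tau)$) and $\log\frac{e^{\pm iz}+n}{1+n}$ (not $\log(e^{\pm iz}+n)$) arise.

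For the right-hand side I integrate $f(z)$ piece by piece: $\int_0^z \sin(w/2)\,dw = 2-2\cos(z/2)$; $\int_0^z -\tfrac18\tan(w/4)\,dw = \tfrac12\log\cos(z/4)$; and $\int_0^z -\tfrac{C+\log 8\pi}{4\pi}\tan(w/2)\,dw = \tfrac{C+\log 8\pi}{2\pi}\log\cos(z/2)$. The only term that is not completely routine is the last one. Writing $u(w)=\log\frac{1-\tan(w/4)}{1+\tan(w/4)}$, a short computation using $1-\tan^2(w/4)=\cos(w/2)/\cos^2(w/4)$ gives $u'(w)=-1/(2\cos(w/2))$, and hence
\[
\frac{d}{dw}\left[\frac{u(w)^2}{4\pi}\right]=\frac{u(w)\,u'(w)}{2\pi}=-\frac{1}{4\pi\cos(w/2)}\log\frac{1-\tan(w/4)}{1+\tan(w/4)},
\]
which is exactly the last summand of $f(w)$. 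Since $u(0)=0$, the definite integral from $0$ to $z$ yields the clean $\tfrac{1}{4\pi}\log^2\bigl((1-\tan(z/4))/(1+\tan(z/4))\bigr)$ appearing in $g(z)$. Adding the four pieces reproduces $g(z)$.

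The last technical point is the legitimacy of interchanging the two infinite sums with $\int_0^z$. For the zero-sum, the bound $|\sinh(w\tau)/\sinh(\pi\tau)|$ is uniform on any compact subset of $|\mathrm{Re}\,w|<\pi$, and combined with the density of zeros $\mathcal{N}(T)=O(T\log T)$ the series converges normally on compacta; dominated convergence then permits term-by-term integration. For the Mangoldt sum the numerator $\Lambda(n)/\sqrt{n}$ and the denominator $|e^{\pm iw}+n|\geq n-1$ give absolute and uniform convergence after integration, since $|\log((e^{\pm iz}+n)/(1+n))|=O(1/n)$. The resulting identity (\ref{formula3}) is valid on the same strip $|\mathrm{Re}\,z|<\pi$ as (\ref{main2-thm}). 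The second formula (\ref{for3}) follows in the same way from (\ref{main2-cor}): $\int_0^z\sin(nw)\,dw=(1-\cos(nz))/n$, and since $\zeta'(n+\tfrac12)/\zeta(n+\tfrac12)=O(2^{-n})$ the termwise integration is justified on $|\mathrm{Re}\,z|<\pi$, $|\mathrm{Im}\,z|<\log 2$.

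The only genuine obstacle is the verification that the antiderivative of the last, awkward summand of $f(z)$ really is $\tfrac{1}{4\pi}\log^2\bigl((1-\tan(z/4))/(1+\tan(z/4))\bigr)$; once the identity $u'(w)=-1/(2\cos(w/2))$ is in hand, everything else is bookkeeping. All other steps are either elementary integrals or standard applications of dominated convergence to series whose convergence was already used in the proof of (\ref{main2-thm}) and (\ref{main2-cor}).
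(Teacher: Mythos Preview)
Your proposal is correct and follows precisely the route the paper indicates: the paper offers no proof beyond the phrase ``Integrating \ldots\ with respect to $z$, we get'', so what you have written is exactly the detailed verification the paper leaves to the reader. Your computation of the antiderivative of the last summand of $f$ via $u'(w)=-1/(2\cos(w/2))$ is the only nontrivial step, and it is carried out correctly; the rest is, as you say, bookkeeping.
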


\begin{example}
We use the integrals \cite[eqs. 7.4 \& 7.10]{arsw}
\[
\int_0^{\pi} \log \cos \frac{z}{2} \, dz=-\pi \log 2, \quad \int_0^{\pi} \log \cos \frac{z}{4} \, dz=2 G -\pi \log 2, 
\]
and
\[
\int_0^{\pi} \log^2 \frac{1-\tan \frac{z}{4}}{1+\tan \frac{z}{4}} \, dz = \int_0^1 \frac{4}{1+x^2} \log^2 \frac{1-x}{1+x}dx = 4 \int_0^1 \frac{\log^2 u}{1+u^2}du = 8 \sum_{n=0}^{\infty} \frac{(-1)^n}{(2n+1)^3}=\frac{\pi^3}{4},
\]
(where the penultimate step comes easily from \cite[12.1.5]{boros-moll} for $k=2$), to integrate    (\ref{for3}) between $z=0$ and $z=\pi$. We obtain
\begin{multline}
\sum_{{\rm \, Re \tau} >0} \frac{1}{\tau^2} - \pi \sum_{{\rm \, Re} \, \tau>0} \frac{1}{\tau \sinh \pi \tau}+\sum_{n=1}^{\infty} (-1)^n \frac{1}{n} \frac{\zeta'(n+\frac12)}{\zeta(n+\frac12)} \\ = -4+G+2\pi-\frac{(\pi+C+\log 8 \pi) \log 2}{2}+\frac{\pi^2}{16},
\end{multline}
where $G$ is the Catalan constant. 
\end{example}
\noindent It is interesting to notice the known expression:
\begin{equation}\label{juan}
\sum_{{\rm Re} \, \tau>0} \frac{1}{\tau^2}=-4+G+\frac{\pi^2}{8}+\frac{1}{2} \left( \frac{\zeta''(\frac12)}{\zeta(\frac12)} - \frac{\zeta'(\frac12)^2}{\zeta(\frac12)^2} \right)=0.02310499\dots.
\end{equation}
In addition, from the functional equation of the Riemann zeta function, one can prove that 
\[
\frac{\zeta'(\frac12)}{\zeta(\frac12)}=\frac{C}{2}+\frac{\log 8\pi}{2}+\frac{\pi}{4},
\]
we can check however that it is not possible to use the functional equation to derive an identity for the constant $\zeta''(1/2)/\zeta(1/2)$.

\subsection{Series with the Moebius function}

\begin{theorem}\label{main-moe-thm}
The following identity
\begin{multline}\label{main-moe-for}
\sum_{n=1}^{\infty} \frac{\mu(n)}{2\pi \sqrt{n}} \frac{ie^{iz}}{e^{iz}+n} = \sum_{\gamma} \frac{1}{\zeta'(\frac12+i\gamma)} \frac{e^{-z\gamma}}{\sinh \pi \gamma} \\ + \frac{i}{2 \pi \zeta(\frac12)} - \sum_{n=1}^{\infty} \frac{(-1)^n (2\pi)^{2n}}{(2n)!\zeta(2n+1)} i e^{-\frac{4n+1}{2} i z} + \frac{1}{2\pi} \sum_{n=1}^{\infty}\frac{(-1)^n}{\zeta(\frac12-n)} i e^{-\frac{n}{2}iz},
\end{multline}
holds for $|{\rm Re}(z)|<\pi$ and ${\rm Im}(z)<0$ assuming the Riemann Hypothesis and that all the zeros of zeta are simple.
\end{theorem}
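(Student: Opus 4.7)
The plan is to mirror the contour-integration strategy of Theorem~\ref{main-thm}, replacing the kernel $\zeta'(s+\tfrac12)/\zeta(s+\tfrac12)$ by $1/\zeta(s+\tfrac12)$. I introduce
\[
J(z) \;=\; \frac{1}{2\pi i} \int_{-1/2-i\infty}^{-1/2+i\infty} \frac{1}{\zeta(s+\tfrac12)}\,\frac{\pi\, z^{s}}{\sin \pi s}\, ds ,
\]
initially along ${\rm Re}(s)=-1/2$. By Lemma~\ref{lema-reczeta}, the kernel $\phi(s)=1/\zeta(s+\tfrac12)$ meets the hypothesis of Lemma~\ref{lema-cota-3} at suitable heights $T\to\infty$, so the integrals along the horizontal and far-vertical sides of the rectangles vanish in both the right and left shifts. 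This is precisely where RH and the simplicity hypothesis are needed: without them the growth bounds underlying Lemma~\ref{lema-reczeta} are unavailable.

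For $|z|<1$ I shift to the right. The enclosed poles are: the simple pole of $\pi/\sin\pi s$ at $s=0$, yielding $1/\zeta(\tfrac12)$ (note that $\zeta$ has a pole, not a zero, at $s=1$, so nothing happens at $s=1/2$); the simple poles of $\pi/\sin\pi s$ at each $s=n\geq 1$, yielding $(-1)^{n}z^{n}/\zeta(n+\tfrac12)$; and the simple poles of $1/\zeta(s+\tfrac12)$ at $s=i\gamma$, one for each non-trivial zero $\rho=\tfrac12+i\gamma$, yielding $-i\pi\,z^{i\gamma}/(\zeta'(\tfrac12+i\gamma)\sinh \pi\gamma)$. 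Expanding $1/\zeta(n+\tfrac12)=\sum_{m}\mu(m)/m^{n+1/2}$ and summing the resulting geometric series in $n$ converts $\sum_{n\geq 1}(-1)^{n}z^{n}/\zeta(n+\tfrac12)$ into $-\sum_{m}\mu(m)\,z/(\sqrt{m}\,(m+z))$, which then continues analytically to all $z\in\Omega$.

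Next I shift the original contour to the left, which is legitimate for $|z|>1$. The poles picked up are the simple poles of $\pi/\sin\pi s$ at $s=-n$ for $n\geq 1$, contributing $(-1)^{n}z^{-n}/\zeta(\tfrac12-n)$, and the simple poles of $1/\zeta(s+\tfrac12)$ at the trivial zeros $s=-2n-\tfrac12$, contributing $-\pi z^{-2n-1/2}/\zeta'(-2n)$. The key algebraic step is to evaluate $\zeta'(-2n)$: differentiating the functional equation $\zeta(1-s)=2(2\pi)^{-s}\Gamma(s)\cos(\pi s/2)\zeta(s)$ at $s=2n+1$, only the derivative of $\cos(\pi s/2)$ survives, giving
\[
\zeta'(-2n) \;=\; \frac{(-1)^{n}\,(2n)!}{2(2\pi)^{2n}}\,\zeta(2n+1),
\]
so $1/\zeta'(-2n)=2(-1)^{n}(2\pi)^{2n}/((2n)!\,\zeta(2n+1))$. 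This explains the precise coefficient appearing in the third term of~(\ref{main-moe-for}).

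Finally, the two expressions for $J(z)$ agree by analytic continuation. I then equate them, substitute $z=e^{iz}$ (which puts us in $\Omega$ when $|{\rm Re}(z)|<\pi$ and in the region $|z|>1$ when ${\rm Im}(z)<0$, so the left-shift expression governs the right-hand side), and rescale by $-i/(2\pi)$. The Moebius geometric series becomes the left-hand side of~(\ref{main-moe-for}); the $s=0$ residue becomes $i/(2\pi\zeta(\tfrac12))$; the zeros $i\gamma$ deliver the sum over $\gamma$ with the factor $e^{-z\gamma}/\sinh \pi\gamma$; and the two series of left-shift residues give the remaining two sums. The principal obstacle is the contour-shift justification of Lemma~\ref{lema-reczeta}, which genuinely requires RH and the simplicity of the non-trivial zeros; a secondary delicate point is verifying convergence of the $\gamma$-sum (controlled under RH by the $1/\sinh \pi\gamma$ decay against the allowed range of ${\rm Im}(z)$). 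Everything else is bookkeeping of signs and prefactors through the substitution, strictly parallel to the proof of Theorem~\ref{main-thm}.
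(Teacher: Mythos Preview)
Your proposal is correct and follows essentially the same route as the paper: define the contour integral with kernel $1/\zeta(s+\tfrac12)\cdot \pi z^{s}/\sin\pi s$, invoke Lemma~\ref{lema-reczeta} (hence RH) and Lemma~\ref{lema-cota-3} to shift right for $|z|<1$ and left for $|z|>1$, collect the residues at the integers, at the non-trivial zeros, and at the trivial zeros $-2n-\tfrac12$, and finally substitute $e^{iz}$ for $z$. Your direct evaluation of $\zeta'(-2n)$ via the functional equation is exactly the content of the paper's limit $\lim_{s\to -2n}\sin(\pi s)/(\pi\zeta(s))$ followed by the functional-equation step, so the only differences are cosmetic (normalization by $1/(2\pi i)$ and the final scaling factor); just be careful with the notational clash in ``$z=e^{iz}$'' and with the orientation signs when you write out the details.
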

\begin{proof}
Let $\Omega = \mathbb{C}-(-\infty,0]$ (the plane with a cut along the real negative axis). We shall denote by $\log z$ the main branch of the $\log$ function defined on $\Omega$ taking $|\arg(z)|<\pi$. We also denote by $z^s=\exp(s\log(z))$, the usual branch of $z^s$ defined also on $\Omega$. Then, let $I(z)$ be the analytic continuation of the function
\[
I(z)=\int_{-1/2-i\infty}^{-1/2+i\infty} \frac{1}{\zeta(s+\frac12)} \frac{\pi}{\sin \pi s} z^s ds.
\]
along the vertical axis $x=-1/2$. To evaluate $I(z)$, we first consider rectangles of vertices $A=-1/2+iT$, \, $B=R+iT$, \, $C=R-iT$, \, $D=-1/2-iT$, where $R=1/2+\lfloor T \rfloor$. If we choose suitable real numbers $T$ then, by Lemma \ref{lema-reczeta} (which assumes the RH), we see that for $|z|<1$ the integrals, as $T \to \infty$, along the sides $AB$, $BC$ and $CD$ are equal to zero. Hence we can apply the residues theorem, and we obtain
\begin{align}
I &=\sum_{n=0}^{\infty} {\rm res}_{s=n} \left( \frac{1}{\zeta(s+\frac12)} \frac{\pi}{\sin \pi s} z^s \right)+\sum_{\rho} {\rm res}_{s=\rho-\frac12} \left( \frac{1}{\zeta(s+\frac12)} \frac{\pi}{\sin \pi s} z^s \right) \nonumber  \\
&= \sum_{n=0}^{\infty} \frac{(-1)^n}{\zeta(n+\frac12)}z^n+\sum_{\rho} \frac{1}{\zeta'(\rho)}\frac{\pi}{\sin \pi (\rho-\frac12)} z^{\rho-\frac12}. \nonumber
\end{align}
By analytic continuation, we deduce that for $z \in \Omega$ we have
\begin{equation}\label{I-moe-right}
I=\frac{1}{\zeta(\frac12)} + \sum_{n=1}^{\infty} \frac{\mu(n)z}{\sqrt{n}(z+n)}+\sum_{\rho} \frac{1}{\zeta'(\rho)}\frac{\pi}{\sin \pi (\rho-\frac12)} z^{\rho-\frac12}. 
\end{equation}
Integrating along the rectangles of vertices $A=-1/2+iT$, \, $B'=S+iT$, \, $C'=S-iT$, \, $D=-1/2-iT$, where $S=1/2-2 \lfloor T \rfloor$, using Lemma \ref{lema-reczeta} (which assumes the RH), we see that the integrals as $T \to \infty$ along the sides $AB'$, $B'C'$ and $C'D$ are equal to zero. By the residues method, we obtain 
\begin{align}
I &= \sum_{n=1}^{\infty} {\rm res}_{s=-2n-\frac12} \left( \frac{1}{\zeta(s+\frac12)} \frac{\pi}{\sin \pi s} z^s \right) + \sum_{n=1}^{\infty} {\rm res}_{s=-n} \left( \frac{1}{\zeta(s+\frac12)} \frac{\pi}{\sin \pi s} z^s \right) \nonumber \\
&= \sum_{n=1}^{\infty} \frac{\sin 2 \pi n}{\zeta(-2n)}z^{-2n-\frac12} + \sum_{n=1}^{\infty} \frac{(-1)^n}{\zeta(\frac12-n)}  z^{-n}, \label{I-moe-left}
\end{align}
where we understand the expression inside the first sum of (\ref{I-moe-left}) as a limit based on the identity
\[ 
\lim_{s \to -2n} \frac{s+2n}{\zeta(s)} = \lim_{s \to -2n} \frac{\sin{\pi s}}{\pi} \frac{1}{\zeta(s)}.
\] 
From the functional equation of zeta we see that
\[
\sum_{n=1}^{\infty} \frac{\sin 2\pi n}{\zeta(-2n)}=-2\pi \frac{(-1)^n (2\pi)^n}{(2n)!\zeta(2n+1)}.
\]
Identifying (\ref{I-moe-right}) and (\ref{I-moe-left}), multiplying by $i$ and replacing $z$ with $e^{iz}$, we complete the proof of the theorem.
\end{proof}

\section{Asymptotic behaviors} \label{sums-zeros}

The formulas here are related to the work by Cramér \cite{cramer}
\begin{example}
If we let in (\ref{main2-cor}) $z=\pi-1/T$, then we have the following convergent asymptotic expansion as $T \to \infty$:
\begin{align}
\sum_{{\rm Re} \, \tau>0} \exp \left(-\frac{\tau}{T}\right) & = \frac{1}{2\pi}T \log T - \frac{C +\log 2 + \log \pi}{2\pi}T+\frac78+\frac{1}{48\pi}\frac{\log T}{T} \nonumber \\ & + A \frac{1}{T} -\frac{9}{64}\frac{1}{T^2}+\frac{7}{11520\pi}\frac{\log T}{T^3}+\mathcal{O}(\frac{1}{T^3}), \label{asympt-example}
\end{align}
where $A$ is the constant
\[
A=\frac{1}{16} + \frac{4 C -1+16\log 2+4\log \pi}{96\pi}+\frac{1}{\pi}\sum_{n=1}^{\infty} n \frac{\zeta'(n+\frac12)}{\zeta(n+\frac12)}+\sum_{{\rm Re} \, \tau>0} \frac{\tau e^{-\pi \tau}}{\sinh \pi \tau} = -0.759578\dots.
\]
\end{example}

\begin{example}
Differentiating (\ref{asympt-example}) with respect to $T$, we get the following formula:
\begin{align}
\sum_{{\rm Re} \, \tau>0} \tau \exp \left(-\frac{\tau}{T}\right) &=\frac{1}{2\pi} T^2 \log T+ \frac{1-C-\log 2 - \log \pi}{2\pi} T^2-\frac{1}{48\pi}\log T \nonumber \\ &+ \left( \frac{1}{48\pi}-A \right) + \frac{9}{32T}+\mathcal{O}(\frac{1}{T^2}),
\end{align}
as $T \to +\infty$.
\end{example}

\begin{example}
If we differentiate (\ref{asympt-example}) twice with respect to $T$, we obtain
\begin{equation}
\sum_{{\rm Re} \, \tau>0} \tau^2 \exp \left(-\frac{\tau}{T}\right)=\frac{1}{\pi} T^3 \log T + \frac{3-2C-2\log 2\pi}{2\pi} T^3-\frac{1}{48\pi} T -\frac{9}{32} + \mathcal{O}(\frac{1}{T}).
\end{equation}
as $T \to +\infty$.
\end{example}

\section{Representation of arithmetical functions}

In this section we give representations of the functions: Mangoldt $\Lambda(t)$, Moebius $\mu(t)$ and Euler phi $\varphi(t)$, in terms of the non-trivial zeros of the Riemann zeta function. We consider that these functions are defined in $\mathbb{R^{+}}$ rather than in $\mathbb{Z^{+}}$, and we will see that it is natural to define them as $0$ when $t$ is a non-integer.

\subsection{Formulas with the Mangoldt's function}

\begin{theorem}\label{theor-mang}
The identity
\begin{equation}\label{mang-1}
\lim_{x \to \pi^{-}} \left( \frac{1}{4\pi} \frac{\Lambda(t)}{\sqrt{t}} \tan \frac{x}{2} + \sum_{{\rm Re} \, \tau>0} \frac{\sinh x \tau}{\sinh \pi \tau} \cos(\tau \log t) \right) = \frac12 \left( \frac{t+1}{t}-\frac{t}{t^2-1} \right) \sqrt{t},
\end{equation}
holds for $t>1$.
\end{theorem}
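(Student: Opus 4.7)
The strategy is to specialize the identity \eqref{main2-thm} to $z = x + i\log t$ with $t>1$ and $x\in(0,\pi)$, then let $x\to\pi^-$. Since $|\operatorname{Re}(z)|=x<\pi$, the identity applies. The substitution gives $e^{iz}=e^{ix}/t$ and $e^{-iz}=te^{-ix}$, while the splitting
\[
\sinh(z\tau)=\sinh(x\tau)\cos(\tau\log t)+i\cosh(x\tau)\sin(\tau\log t)
\]
displays $\sum_{\operatorname{Re}\tau>0}\frac{\sinh(x\tau)}{\sinh\pi\tau}\cos(\tau\log t)$ as the real part of $\sum_{\operatorname{Re}\tau>0}\sinh(z\tau)/\sinh\pi\tau$ (literally so on RH where the $\tau$ are real; in general one groups conjugate zeros so that the contribution is real). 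Taking real parts in \eqref{main2-thm} thus equates the target sum plus the real part of the Mangoldt series with $\operatorname{Re}f(z)$.

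A direct calculation using $|e^{ix}+nt|^2=1+2nt\cos x+n^2t^2$ and $|te^{-ix}+n|^2=t^2+2nt\cos x+n^2$ yields
\[
\operatorname{Re}\!\left(\frac{ie^{iz}}{e^{iz}+n}-\frac{ie^{-iz}}{e^{-iz}+n}\right)=-nt\sin x\!\left(\frac{1}{1+2nt\cos x+n^2t^2}+\frac{1}{t^2+2nt\cos x+n^2}\right).
\]
At $x=\pi$ the two denominators become $(1-nt)^2$ and $(t-n)^2$; only the second vanishes, and only when $n=t$ is an integer (hence a prime power, so $\Lambda(n)\neq0$). In that case $t^2+2t^2\cos x+t^2=4t^2\cos^2(x/2)$, and combined with $\sin x=2\sin(x/2)\cos(x/2)$ the singular piece of the real Mangoldt sum simplifies to exactly $-\frac{\Lambda(t)}{4\pi\sqrt t}\tan(x/2)$. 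Moving this to the opposite side of \eqref{main2-thm} produces the $\tan(x/2)$ term appearing in the claim. The remaining contributions (the regular part of the $n=t$ term and all $n\neq t$ terms) are of order $\sin x$ with absolutely summable coefficients, dominated by $\sum_n \Lambda(n)/n^{3/2}<\infty$, so they vanish in the limit $x\to\pi^-$.

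It remains to evaluate $\operatorname{Re}f(\pi+iu)$ with $u=\log t$. At $z=\pi+iu$ one has $\sin(z/2)=\cosh(u/2)$, $\cos(z/2)=-i\sinh(u/2)$, $\tan(z/2)=i\coth(u/2)$, and by a half-angle computation $\operatorname{Re}\tan(z/4)=1/\cosh(u/2)$. Moreover $(1-\tan(z/4))/(1+\tan(z/4))=-i\tanh(u/4)$, whose principal logarithm equals $\log\tanh(u/4)-i\pi/2$; multiplied by $1/(4\pi\cos(z/2))=i/(4\pi\sinh(u/2))$ and negated, the last summand of $f$ contributes $-1/(8\sinh(u/2))$ to $\operatorname{Re}f$, while the $\tan(z/2)$-summand contributes $0$ (purely imaginary value with real coefficient). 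Collecting,
\[
\operatorname{Re}f(\pi+iu)=\cosh(u/2)-\frac{1}{8\cosh(u/2)}-\frac{1}{8\sinh(u/2)},
\]
and substituting $\cosh(u/2)=(t+1)/(2\sqrt t)$ and $\sinh(u/2)=(t-1)/(2\sqrt t)$ collapses this algebraically to $\tfrac12\!\left(\tfrac{t+1}{t}-\tfrac{t}{t^2-1}\right)\!\sqrt t$, matching the claimed right-hand side. The principal technical obstacle is justifying the termwise passage to the limit in the $n\neq t$ portion of the Mangoldt series and, strictly speaking, arranging the sum over non-trivial zeros so that taking real parts recovers the series displayed in the theorem.
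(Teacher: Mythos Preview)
Your proof is correct and follows essentially the same route as the paper: substitute $z=x\pm i\log t$ (the paper takes the minus sign, you the plus; these are equivalent since $f$ satisfies $\overline{f(\bar z)}=f(z)$) into \eqref{main2-thm}, take real parts, and isolate the single divergent Mangoldt term at $n=t$ to produce $\frac{\Lambda(t)}{4\pi\sqrt t}\tan(x/2)$. Your write-up is in fact more complete than the paper's own proof, which leaves the evaluation of $\operatorname{Re}f(\pi\pm i\log t)$ to the reader and does not spell out the dominated-convergence justification for the non-singular Mangoldt terms; you carry out both of these explicitly and correctly.
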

\begin{proof}
Replace $z$ with $x-i \log t$ in (\ref{main2-thm}) for $t>1$, take real parts and observe that
\[
{\rm Re} \sum_{n=1}^{\infty} \frac{\Lambda(n)}{2\pi \sqrt{n}} \, \frac{ie^{ix} t}{e^{ix}t+n}=-\sin x \sum_{n=1}^{\infty} \frac{\Lambda(n)}{2\pi \sqrt{n}} \frac{tn}{(e^{ix}t+n)(e^{-ix}t+n)}.
\]
We have to prove that
\[
\lim_{x \to \pi^{-}} \left( -\sin x \sum_{n=1}^{\infty} \frac{\Lambda(n)}{2\pi \sqrt{n}} \frac{tn}{(e^{ix}t+n)(e^{-ix}t+n)}+\frac{\Lambda(t)}{2\pi \sqrt{t}} \, \frac{\sin x}{2(1+\cos x)}\right)=0.
\]
But this identity is evident if $t$ is not the power $p^k$ of a prime $p$. When $t=p^k$ the formula comes observing that the only term that contributes to the sum is $n=p^k$. Using the elementary trigonometric identity
\[
\tan \frac{x}{2}=\frac{\sin x}{1+\cos x},
\]
we complete the proof of the theorem.
\end{proof}

\begin{corollary}\label{coro-mang}
From (\ref{mang-1}) we get the following representation of the Mangoldt function as $x \to \pi^{-}$:
\begin{align}
\Lambda(t) = &-4 \pi \sqrt{t} \cot \frac{x}{2} \sum_{{\rm Re} \, \tau>0} \left( \frac{\sinh x \tau}{\sinh \pi \tau} \cos(\tau \log t) \right) \nonumber \\ &+2\pi \cot \frac{x}{2} \left( t-\frac{1}{t^2-1} \right)+o\left( \cot \frac{x}{2} \right). \label{with-all-zeros}
\end{align}
If we assume the Riemann Hypothesis, then we can replace $\tau$ and ${\rm Re} \, \tau$ with $\gamma$.
\end{corollary}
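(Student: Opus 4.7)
The plan is to solve the limit statement (\ref{mang-1}) for $\Lambda(t)$. First I would rewrite the limit as the asymptotic equality
\[
\frac{1}{4\pi} \frac{\Lambda(t)}{\sqrt{t}} \tan \frac{x}{2} + \sum_{{\rm Re} \, \tau>0} \frac{\sinh x \tau}{\sinh \pi \tau} \cos(\tau \log t) = \frac12 \left( \frac{t+1}{t}-\frac{t}{t^2-1} \right) \sqrt{t} + o(1)
\]
as $x \to \pi^{-}$ with $t > 1$ fixed. Then I would transpose the sum over zeros to the right-hand side and multiply through by $4\pi\sqrt{t}\cot(x/2)$, which isolates $\Lambda(t)$ on the left.

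The only algebraic point to verify is the simplification of the constant term. One computes
\[
4\pi \sqrt{t} \cdot \frac12 \left( \frac{t+1}{t} - \frac{t}{t^2-1} \right) \sqrt{t} = 2\pi t \left( \frac{t+1}{t} - \frac{t}{t^2-1} \right) = 2\pi \left( t + 1 - \frac{t^2}{t^2-1} \right),
\]
and using $t^2/(t^2-1) = 1 + 1/(t^2-1)$ this collapses to $2\pi \left( t - \frac{1}{t^2-1} \right)$, matching the constant in (\ref{with-all-zeros}). The residual term is an $o(1)$ multiplied by $4\pi\sqrt{t}\cot(x/2)$, which is $o(\cot(x/2))$ by definition (dividing through by $\cot(x/2)$ returns $4\pi\sqrt{t} \cdot o(1) \to 0$, with $t$ fixed).

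For the final assertion, I would note that under the Riemann Hypothesis every non-trivial zero satisfies $\beta = 1/2$, so $\tau = -i(\rho - 1/2) = \gamma$ is real whenever $\rho$ is a non-trivial zero. The summation condition ${\rm Re} \, \tau > 0$ thus becomes $\gamma > 0$, and $\tau$ may be replaced by $\gamma$ throughout (\ref{with-all-zeros}). There is no genuine obstacle in this corollary: it is essentially a one-line rearrangement of Theorem \ref{theor-mang}, and all the substantive content lies in that theorem.
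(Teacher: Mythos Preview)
Your proposal is correct and follows exactly the route the paper intends: the paper offers no separate proof for this corollary, treating it as an immediate rearrangement of (\ref{mang-1}), and your argument---rewriting the limit as an $o(1)$ statement, multiplying through by $4\pi\sqrt{t}\cot(x/2)$, and simplifying $2\pi t\bigl(\tfrac{t+1}{t}-\tfrac{t}{t^2-1}\bigr)$ to $2\pi\bigl(t-\tfrac{1}{t^2-1}\bigr)$---is precisely that rearrangement made explicit.
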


\noindent We have used Sage  \cite{stein} to write the code below. In it we have taken $x=3.14$ and the first $10000$ zeros of zeta.

\begin{verbatim}

var('t')
z=sage.databases.odlyzko.zeta_zeros()
npi=3.1415926535; x=3.14; b=10000
v(t)=sum([sinh(x*z[j])/sinh(npi*z[j])*cos(log(t)*z[j]) for j in range(b)])
r(t)=-4*npi*sqrt(t)*v(t)*cot(x/2); 
ter(t)=2*npi*(t-1/(t^2-1))*cot(x/2)
plot(r(t)+ter(t),t,2,26)+plot(log(t),t,2,26, color='red')

\end{verbatim}

In Figure \ref{mangoldt} we see the graphic, when we execute the code.
\begin{figure}[H]
\caption{Mangoldt}
\includegraphics[scale=0.75]{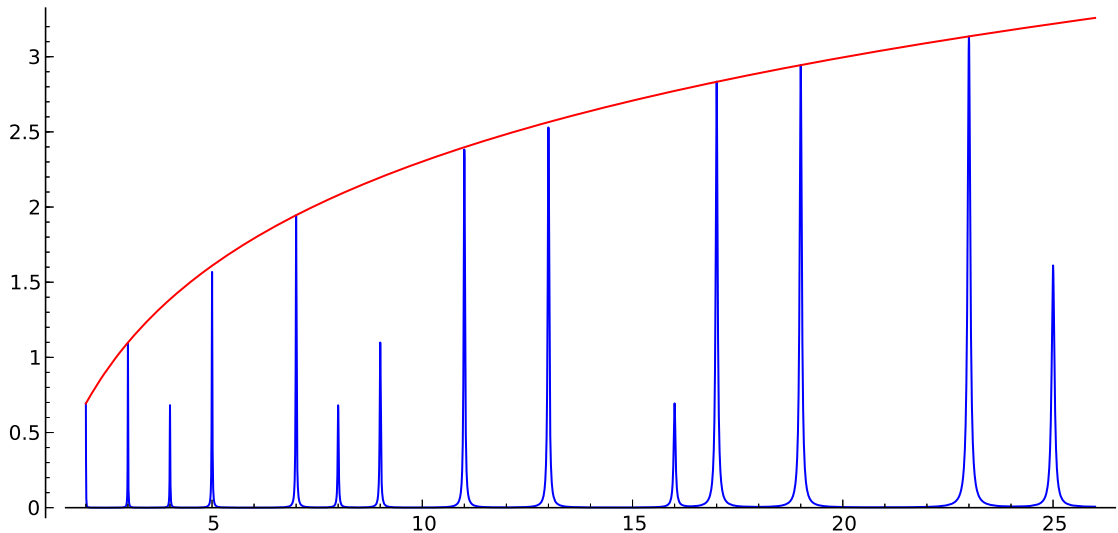}
\label{mangoldt}
\end{figure}

\begin{corollary}
The following formula holds 
\begin{align}
\lim_{x \to \pi^{-}} \sum_{\gamma>0} \frac{\sinh x\gamma}{\sinh \pi \gamma} 
\left( \frac{\log 2}{\sqrt{2}} \cos(\gamma \log t)-\frac{\Lambda(t)}{\sqrt{t}}\cos(\gamma \log 2)
\right) \nonumber \\ = \frac{\log 2}{\sqrt{2}} \left( \frac{\sqrt{t}}{2}-\frac{1}{2(t^2-1)\sqrt{t}} \right)-\frac{\Lambda(t)}{\sqrt{t}} \frac{5\sqrt{2}}{12}. \label{eq-prime}
\end{align}
\end{corollary}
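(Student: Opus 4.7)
The plan is to specialize Corollary \ref{coro-mang} (equivalently Theorem \ref{theor-mang} under the Riemann Hypothesis, so that ${\rm Re}\,\tau>0$ reduces to $\gamma>0$) to the particular value $t=2$ and then form the linear combination suggested by the statement, in which the divergent $\tan(x/2)$-piece cancels. Abbreviate
\[
S(u,x):=\sum_{\gamma>0}\frac{\sinh x\gamma}{\sinh \pi\gamma}\cos(\gamma\log u),
\]
so that Theorem \ref{theor-mang} may be rewritten as the asymptotic
\[
S(u,x)=-\frac{1}{4\pi}\frac{\Lambda(u)}{\sqrt u}\tan\frac{x}{2}+\frac{\sqrt u}{2}\left(\frac{u+1}{u}-\frac{u}{u^2-1}\right)+o(1), \qquad x\to\pi^-.
\]

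First I would instantiate this at $u=2$, where $\Lambda(2)=\log 2$ and $\tfrac12\bigl(\tfrac32-\tfrac23\bigr)\sqrt 2=\tfrac{5\sqrt 2}{12}$, obtaining
\[
S(2,x)=-\frac{1}{4\pi}\frac{\log 2}{\sqrt 2}\tan\frac{x}{2}+\frac{5\sqrt 2}{12}+o(1).
\]
Then I would form
\[
\frac{\log 2}{\sqrt 2}\,S(t,x)-\frac{\Lambda(t)}{\sqrt t}\,S(2,x),
\]
which is precisely the double sum appearing on the left-hand side of \eqref{eq-prime}. The key observation is that the two $\tan(x/2)$-coefficients multiply into the same scalar $\frac{\Lambda(t)\log 2}{4\pi\sqrt{2t}}$, so upon subtraction the singular piece cancels and the limit $x\to\pi^-$ exists. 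Feeding in the two asymptotic constants and invoking the elementary identity
\[
\frac{\sqrt t}{2}\left(\frac{t+1}{t}-\frac{t}{t^2-1}\right)=\frac{\sqrt t}{2}-\frac{1}{2(t^2-1)\sqrt t}
\]
(checked by bringing everything over the common denominator $2\sqrt t\,(t^2-1)$) produces exactly the right-hand side of \eqref{eq-prime}.

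The entire argument is a cancellation trick, so no real obstacle arises. The only mildly delicate point is that each of $S(t,x)$ and $S(2,x)$ diverges individually as $x\to\pi^-$, which forbids splitting the limit of the difference into the difference of two limits. Rewriting each $S$ via Theorem \ref{theor-mang} as its asymptotic value plus $o(1)$ \emph{before} subtracting is what legitimises passing to the limit, and the choice of the weights $\log 2/\sqrt 2$ and $\Lambda(t)/\sqrt t$ is precisely what is needed to make the $\tan(x/2)$ contributions annihilate each other.
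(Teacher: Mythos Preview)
Your proof is correct and follows exactly the approach of the paper: substitute $t=2$ in Theorem~\ref{theor-mang} (formula~\eqref{mang-1}) and take the linear combination $\frac{\log 2}{\sqrt 2}\cdot(\text{general})-\frac{\Lambda(t)}{\sqrt t}\cdot(\text{case }t=2)$ so that the $\tan(x/2)$ terms cancel. Your added remark that one must substitute the asymptotics \emph{before} passing to the limit, since neither $S(t,x)$ nor $S(2,x)$ has a finite limit individually, is a worthwhile clarification.
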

Observe that although we have written the formula assuming RH, we can write it without that assumption if we sum over ${\rm Re} \, \tau>0$ and replace $\gamma$ with $\tau$ in the other places.
\begin{proof}
Substitute $t=2$ in formula (\ref{mang-1}), then use this particular formula together with the general formula (\ref{mang-1}) to eliminate the term involving $\tan(x/2)$. 
\end{proof}
Observe that when $t$ is not a prime nor a power of prime, we have
\[
\lim_{x \to \pi^{-}} \sum_{\gamma>0} \frac{\sinh x\gamma}{\sinh \pi \gamma} 
\cos(\gamma \log t)= \frac{\sqrt{t}}{2}-\frac{1}{2(t^2-1)\sqrt{t}},
\]
and that the limit is infinite otherwise.
\begin{corollary}
The following formula:
\begin{equation}
\lim_{z \to 1^{-}} \sum_{j=0}^{\infty} \cos(\gamma_{j+1} \log t) \, z^j=
\frac{\sqrt{t}}{2}-\frac{1}{2(t^2-1)\sqrt{t}},
\end{equation}
holds whenever $t$ is not a prime nor a power of prime. Otherwise the limit is infinite.
\end{corollary}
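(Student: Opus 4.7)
The plan is to deduce this Abel-summation statement directly from the observation immediately preceding the corollary, namely that for $t$ not a prime or prime power,
\[
\lim_{x \to \pi^{-}} \sum_{\gamma>0} \frac{\sinh x\gamma}{\sinh \pi \gamma}\cos(\gamma \log t) = \frac{\sqrt{t}}{2}-\frac{1}{2(t^2-1)\sqrt{t}}.
\]
Read as an Abel-type regularization of the divergent series $\sum_j\cos(\gamma_j\log t)$ with kernel $\omega_j(x)=\sinh(x\gamma_j)/\sinh(\pi\gamma_j)$, the corollary asserts the same limit under the alternative kernel $z^{j-1}$. The final sentence (divergence when $t$ is a prime or prime power) will drop out because the singular term $\frac{\Lambda(t)}{4\pi\sqrt t}\tan(x/2)$ isolated in Theorem \ref{theor-mang} translates into an analogous unbounded contribution under $z\to 1^-$.

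First I would invoke Landau's formula $\sum_{0<\gamma\le T}t^{\rho}=-T\Lambda(t)/(2\pi)+O(\log T)$, which under RH gives $S_n:=\sum_{j=1}^{n}\cos(\gamma_j\log t)=O(\log n)$ whenever $\Lambda(t)=0$. Combined with the von Mangoldt density $\gamma_j\sim 2\pi j/\log j$, this makes $\sum_{j\ge 0}\cos(\gamma_{j+1}\log t)\,z^j$ absolutely convergent for $|z|<1$ and, after Abel's summation by parts, equal to $(1-z)\sum_{j\ge 1}S_j z^{j-1}$. The same summation by parts rewrites the $\sinh$-weighted sum as $\sum_{j\ge 1}S_j(\omega_j(x)-\omega_{j+1}(x))$, which by the preceding observation tends to the claimed value as $x\to\pi^-$.

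The decisive step is to show that the difference
\[
\sum_{j\ge 1} S_j\,\bigl[(\omega_j(x)-\omega_{j+1}(x))-(z^{j-1}-z^j)\bigr]
\]
vanishes when the two regularization parameters are taken simultaneously to their limits along a suitable joint path $x=x(z)$. Using $\omega_j(x)\sim e^{-(\pi-x)\gamma_j}$ for large $\gamma_j$ and $z^{j-1}=e^{-(-\log z)j}$, one would match $(-\log z)\cdot j\sim(\pi-x)\gamma_j$ on average via $\gamma_j\sim 2\pi j/\log j$, then estimate the bracket termwise using the mean-value theorem together with the monotone, unit-limiting behaviour of both kernels. The main obstacle is precisely this comparison, since two Abel-type regularizations of the same divergent series generically disagree; the required coincidence has to be forced by the logarithmic smallness of $S_j$ and the oscillation of $\cos(\gamma_j\log t)$. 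Should the direct comparison prove too delicate, a clean alternative is to re-run the contour argument used for Theorem \ref{main-thm}, replacing $\pi/\sin(\pi s)$ by a kernel adapted to the $z^j$-weight, and check that the residues at $s=\rho-1/2$ again reproduce the same target value while the regularization-dependent residues vanish as $z\to 1^-$.
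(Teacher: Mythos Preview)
The paper's argument is a two-line heuristic quite different from yours: it inserts a factor $z^j$ into the already-established $\sinh$-limit to form the double sum
\[
\sum_{j\ge 0}\frac{\sinh(x\gamma_{j+1})}{\sinh(\pi\gamma_{j+1})}\cos(\gamma_{j+1}\log t)\,z^j,
\]
notes that for fixed $x<\pi$ this recovers the $\sinh$-sum as $z\to 1^-$ (Abel's theorem for a convergent series) and for fixed $z<1$ recovers the $z$-sum as $x\to\pi^-$ (dominated convergence, since $|\omega_j(x)z^{j}\cos|\le z^{j}$), and then simply asserts that the two limits may be interchanged. No justification of the interchange is offered.

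Your route via Landau's formula and summation by parts is more ambitious and correctly isolates the real issue---that two Abel-type regularizations of a divergent series need not agree---but you do not close it either. Two specific problems: (i) taking a joint path $x=x(z)$ and showing the difference of partial-summed series vanishes along it does not by itself give either iterated limit; you would still need a uniformity statement (Moore--Osgood type) to pass from the diagonal limit to $\lim_{z\to 1^-}$; (ii) your fallback of ``re-running the contour argument with a kernel adapted to the $z^j$-weight'' is structurally blocked: the residues of any meromorphic kernel at $s=\rho-\tfrac12$ produce weights that are functions of $\gamma$, not of the ordinal $j$, so no kernel replacement of $\pi/\sin\pi s$ will directly manufacture $z^{j}$.

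On the positive side, your Landau input $S_n=O(\log n)$ (for $\Lambda(t)=0$, under RH) is exactly the kind of quantitative control absent from the paper's proof, and is the natural ingredient for a rigorous justification of the paper's interchange: one could try to show that $C(x,z)\to B(z)$ uniformly in $z\in[0,1)$ as $x\to\pi^-$ by summing by parts against $S_n$ and estimating $\sum S_j\bigl[(\omega_j(x)-1)z^{j-1}-(\omega_{j+1}(x)-1)z^{j}\bigr]$. If you want to salvage your write-up, I would recast it as supplying that missing uniform-convergence estimate for the paper's double sum rather than as a direct comparison of two separate regularizations.
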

\begin{proof}
Just write
\[
\lim_{x \to \pi^{-}} \sum_{j=0}^{\infty} \frac{\sinh(x \gamma_{j+1})}{\sinh(\pi \gamma_{j+1})} \cos(\gamma_{j+1} \log t)=\lim_{x \to \pi^{-}} \lim_{z \to 1^{-}} \sum_{j=0}^{\infty} \frac{\sinh(x \gamma_{j+1})}{\sinh(\pi \gamma_{j+1})} \cos(\gamma_{j+1} \log t) z^j,
\]
and observe that we can commute the limits.
\end{proof}
\begin{conjecture}
The identity
\begin{equation}
\lim_{N \to \infty} \sum_{j=0}^{N} \cos(\gamma_{j+1} \log t) \left( 1-\frac{\log N}{N} \right)^j 
=\frac{\sqrt{t}}{2}-\frac{1}{2(t^2-1)\sqrt{t}},
\end{equation}
is true whenever $t$ is not a prime nor a power of prime. Otherwise the limit is divergent.
\end{conjecture}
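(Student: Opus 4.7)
The idea is to deduce the conjecture from the preceding corollary by an Abel-summation (Tauberian) argument. Set $z_N = 1 - (\log N)/N$, so that $z_N \to 1^{-}$ as $N \to \infty$. The preceding corollary then yields
\[
\lim_{N \to \infty} \sum_{j=0}^{\infty} \cos(\gamma_{j+1} \log t)\, z_N^{j} = \frac{\sqrt{t}}{2}-\frac{1}{2(t^2-1)\sqrt{t}},
\]
and it therefore suffices to show that the tail $T_N := \sum_{j>N} \cos(\gamma_{j+1}\log t)\, z_N^{j}$ tends to $0$ as $N\to\infty$.

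First I would invoke Landau's formula quoted in the introduction: since $t$ is not a prime power, $\Lambda(t)=0$, which forces $\sum_{0<\gamma\le T}\cos(\gamma\log t)=\mathcal{O}(\log T)$. Combined with the Riemann--von Mangoldt estimate $\mathcal{N}(T)\sim (T/2\pi)\log(T/2\pi)$, so that $\gamma_n\sim 2\pi n/\log n$, the partial sums $A_n := \sum_{j=0}^{n}\cos(\gamma_{j+1}\log t)$ satisfy $A_n = \mathcal{O}(\log n)$.

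Next I would apply Abel summation: since $A_M\, z_N^{M}\to 0$ as $M\to\infty$,
\[
T_N = -A_N\, z_N^{N+1} + (1-z_N)\sum_{j>N} A_j\, z_N^{\,j}.
\]
Using $1-z_N = (\log N)/N$, $z_N^{N}\sim 1/N$, and $A_j=\mathcal{O}(\log j)$, the weight $z_N^{j}$ decays on the scale $j\sim N/\log N$, so a direct estimate (or a comparison with $\int_N^{\infty}\log x \cdot e^{-x(\log N)/N}\,dx$) gives $\sum_{j>N}|A_j|\,z_N^{j}=\mathcal{O}(1)$. Combining, $|T_N|=\mathcal{O}((\log N)/N)\to 0$, proving the conjecture in the non-prime-power case.

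\textbf{The hard part} is securing partial-sum bounds for $A_n$ sharp enough to compensate the factor $(1-z_N)^{-1}=N/\log N$; Landau's formula provides exactly the required $\mathcal{O}(\log T)$ control once $\Lambda(t)=0$. This razor-edge sharpness is also the mechanism behind the divergence asserted in the alternative: when $t$ is a prime power, $\Lambda(t)\neq 0$ and Landau's formula contributes a linearly growing term $A_n\asymp n/\log n$; the same Abel identity applied to $\sum_{j=0}^{N} a_j z_N^{j}$ then produces a dominant contribution $(1-z_N)\sum_{j<N}A_j z_N^{j}\asymp N/\log^{2}N$, matching the blow-up of the Abel limit as $z\to 1^{-}$.
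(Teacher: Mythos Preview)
The paper labels this statement a \emph{conjecture} and offers no proof; there is nothing to compare your argument against. What you have written is not a reproduction of the paper's reasoning but an attempt to \emph{settle} the conjecture.

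Your Tauberian strategy is sound and, as far as I can see, essentially correct. The three ingredients line up exactly as needed:
\begin{itemize}
\item From the preceding corollary, the full Abel sum $\sum_{j\ge 0}\cos(\gamma_{j+1}\log t)\,z_N^{j}$ tends to the stated limit (or diverges) once $z_N\to 1^{-}$.
\item With $z_N=1-(\log N)/N$ one has $z_N^{N}\sim 1/N$, so the boundary term $A_N z_N^{N+1}$ is $O((\log N)/N)$.
\item Landau's formula (unconditional, for fixed $t$) gives $A_n=O(\log n)$ when $\Lambda(t)=0$; the tail estimate $\sum_{j>N}(\log j)\,z_N^{j}=O(1)$ then follows from $z_N^{N}\sum_{m\ge 1}\log(N+m)\,z_N^{m}\le z_N^{N}\bigl[(\log N)\tfrac{z_N}{1-z_N}+\tfrac{1}{N}\tfrac{z_N}{(1-z_N)^{2}}\bigr]=O(1)$, whence $(1-z_N)\cdot O(1)=O((\log N)/N)\to 0$.
\end{itemize}
For the prime-power case your heuristic is also correct: Landau gives $A_n\sim -\tfrac{\Lambda(t)}{2\pi\sqrt{t}}\gamma_{n+1}\sim -c\,n/\log n$, and Abel summation on the \emph{finite} sum yields a main term $(1-z_N)\sum_{j\le N}A_j z_N^{j}\asymp N/(\log N)^{2}\to\infty$, while the same tail computation still gives $T_N\to 0$; so $S_N$ diverges exactly as the Abel sum does.

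Two small caveats if you want to turn the sketch into a full proof. First, Landau's $O(\log T)$ has an implied constant depending on $t$; since $t$ is fixed this is harmless, but it should be said. Second, the paper's ``preceding corollary'' is itself proved only by the remark ``observe that we can commute the limits''; your argument takes that corollary as input, so any residual gap there is inherited.
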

In these formulas $\gamma_1$, $\gamma_2$, $\cdots$, denote the imaginary positive parts of the zeros of zeta (a countable set). 
 
\subsection{A known sum over the zeros of zeta}
In next theorem, we reprove in a different way a known formula which is proposed in the book \cite[Exercise 5.11]{ellison}. Its representation has the aspect of a staircase which jumps in the primes, and in the powers of primes. See also the staircase of $\psi(x)$ in \cite{mazur-stein}. 

\begin{theorem}
The identity (which assumes the Riemann Hypothesis)
\begin{equation}\label{staircase}
F(t)-\sum_{\gamma>0} \frac{\sin (\gamma \log t)}{\gamma}=\frac14 \frac{\Lambda(t)}{\sqrt{t}}+\frac12 \sum_{n<t} \frac{\Lambda(n)}{\sqrt{n}},
\end{equation}
where
\[
F(t)=\sqrt{t}-\frac{1}{\sqrt{t}}-\frac12 \arctan \frac{\sqrt{t}-1}{\sqrt{t}+1}-\frac14 \log \frac{\sqrt{t}-1}{\sqrt{t}+1}-\frac14 (\log \pi +3\log 2+C),
\]
holds for $t>1$. 
\end{theorem}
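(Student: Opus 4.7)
The plan is to apply Perron's formula to the Dirichlet series of $-\zeta'/\zeta$ and then shift the contour leftward past all the poles, in the same rectangular manner used in the proof of Theorem~\ref{main-thm}. For $c>\tfrac12$, set
\[
I(t):=\frac{1}{2\pi i}\int_{c-i\infty}^{c+i\infty}\left(-\frac{\zeta'(s+\tfrac12)}{\zeta(s+\tfrac12)}\right)\frac{t^{s}}{s}\,ds.
\]
Since $-\zeta'(s+\tfrac12)/\zeta(s+\tfrac12)=\sum_{n\ge1}\Lambda(n)n^{-s-1/2}$ converges absolutely for $\mathrm{Re}\,s>\tfrac12$, the Perron formula with mid-jump convention identifies
\[
I(t)=\tfrac12\frac{\Lambda(t)}{\sqrt{t}}+\sum_{n<t}\frac{\Lambda(n)}{\sqrt{n}},
\]
which is exactly twice the right-hand side of~(\ref{staircase}). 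The remaining task is to evaluate $I(t)$ by residues.

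Displacing the contour to the left via rectangles with vertices $c\pm iT$ and $S\pm iT$, $S=\tfrac12-2\lfloor T\rfloor$, with $T$ chosen as in Lemma~\ref{lema-Zeta}, under the RH the enclosed poles and residues are: $s=\tfrac12$ with residue $2\sqrt{t}$ (from the pole of $\zeta(s+\tfrac12)$ at $s+\tfrac12=1$); $s=0$ with residue $-\zeta'(\tfrac12)/\zeta(\tfrac12)$ (from $1/s$); the non-trivial zeros $s=\rho-\tfrac12=i\gamma$, which upon pairing $\rho$ with $\bar\rho$ contribute $-2\sum_{\gamma>0}\sin(\gamma\log t)/\gamma$; and the trivial zeros $s=-2n-\tfrac12$, $n\ge1$, with residues $2\,t^{-(4n+1)/2}/(4n+1)$.

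The trivial-zero contribution sums in closed form. Setting $u=1/\sqrt{t}$ and using the elementary identities
\[
\sum_{n=0}^{\infty}\frac{u^{4n+1}}{4n+1}=\tfrac12\arctan u-\tfrac14\log\frac{1-u}{1+u},\qquad \arctan u=\tfrac{\pi}{4}-\arctan\frac{1-u}{1+u},
\]
one obtains
\[
\sum_{n=1}^{\infty}\frac{2\,t^{-(4n+1)/2}}{4n+1}=\frac{\pi}{4}-\arctan\frac{\sqrt{t}-1}{\sqrt{t}+1}-\tfrac12\log\frac{\sqrt{t}-1}{\sqrt{t}+1}-\frac{2}{\sqrt{t}}.
\]
Combining this with the known value $\zeta'(\tfrac12)/\zeta(\tfrac12)=C/2+\log(8\pi)/2+\pi/4$ (stated just after~(\ref{juan})), the two $\pi/4$ contributions cancel, and the remaining constants assemble into $-\tfrac14(\log\pi+3\log 2+C)$, so the residue total equals exactly $2F(t)-2\sum_{\gamma>0}\sin(\gamma\log t)/\gamma$. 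Dividing by $2$ yields~(\ref{staircase}).

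The main obstacle is the justification of the contour shift. The Perron kernel $1/s$, unlike $\pi/\sin\pi s$ used in Theorem~\ref{main-thm}, has no exponential decay in $|\mathrm{Im}(s)|$, so $I(t)$ converges only conditionally and Lemma~\ref{lema-cota-3} does not apply directly. One uses the classical truncated-Perron argument instead: truncate at height $T$, bound the resulting Perron error coming from integers close to $t$, and estimate the horizontal sides of the rectangle via the $\mathcal{O}(\log^{2}T)$ bound of Lemma~\ref{lema-Zeta}; the far-left vertical side vanishes by the bound $|\zeta'/\zeta|=\mathcal{O}(\log|s|)$ for $\sigma<-1$ (away from the circles around the trivial zeros, whence the choice of $S$). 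Once this technical step is in place, the residue computation and power-series manipulation described above are elementary.
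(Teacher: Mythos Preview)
Your argument is correct, and the residue bookkeeping is clean: the pole at $s=\tfrac12$, the pole at $s=0$, the paired contributions from $s=i\gamma$, and the trivial-zero series summed via $\sum_{n\ge0}u^{4n+1}/(4n+1)=\tfrac12\arctan u-\tfrac14\log\frac{1-u}{1+u}$ assemble exactly into $2F(t)-2\sum_{\gamma>0}\sin(\gamma\log t)/\gamma$ after inserting the known value of $\zeta'(\tfrac12)/\zeta(\tfrac12)$. The one genuinely nontrivial point is, as you note, the contour shift with the $1/s$ kernel; your appeal to the standard truncated-Perron technology is the correct remedy, and it does go through here just as in the classical derivation of the explicit formula for $\psi(x)$.

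However, this is \emph{not} how the paper proceeds. The paper avoids Perron and the $1/s$ kernel entirely: it substitutes $z=\pi-i\log t$ into the already-established identity~(\ref{formula3}), which is itself an integrated consequence of Theorem~\ref{main-thm} (proved with the exponentially decaying kernel $\pi/\sin\pi s$). Taking imaginary parts, the zero-sum term $\sum_{\tau}(\cosh z\tau-1)/(\tau\sinh\pi\tau)$ becomes $\sum_{\gamma>0}\sin(\gamma\log t)/\gamma$, while the staircase on the right emerges from the arguments of the logarithms: for $n<t$ the factor $e^{iz}+n=n-t$ is a negative real number, contributing $i\pi$, and when $t$ is itself a prime power the boundary term contributes an extra $i\pi/2$, producing the $\tfrac14\Lambda(t)/\sqrt{t}$. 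The trade-off is that the paper's route recycles its own machinery and needs no separate truncation argument (all the hard analysis was front-loaded into Lemma~\ref{lema-Zeta} via the $\sin\pi s$ kernel), whereas your route is self-contained and classical but must import the truncated-Perron error bounds.
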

Observe again that although we have written the formula assuming RH, we can write a version of it which does not assume it summing over ${\rm \, Re} \, \tau>0$ and replacing $\gamma$ with $\tau$ in the other places.
\begin{proof}
Replace $z$ with $\pi-i\log t$ in (\ref{formula3}). Then take imaginary parts and observe that each logarithm of a negative real number contributes to the imaginary part with $i \pi$. Finally, observe that for powers of prime numbers we have in addition a logarithm of type $\log (e^{ix}+1)$ with $x \to \pi^{-}$. As the argument of
$1+e^{ix}=(1+\cos x)+i \sin x$ is given by
\[
\tan \omega = \frac{\sin x}{1+\cos x} = \tan \frac{x}{2},
\]
we get that $\omega=\pi/2$. Hence, in case that $t$ be a power of a prime, one of the logarithms gives the extra contribution $i\pi/2$ to the imaginary part.
\end{proof}

\noindent We have used Sage \cite{stein} to represent the graphic of the function in the left side of (\ref{staircase}). Here we show the Sage code that we have written:

\begin{verbatim}

var('t')
z=sage.databases.odlyzko.zeta_zeros()
vv(t)=sum([((sin(log(t)*z[j]))/z[j]) for j in range(10000)])
g1(t)=sqrt(t)-1/sqrt(t)-1/2*arctan((sqrt(t)-1)/(sqrt(t)+1))
g2(t)=-1/8*log((1-sqrt(t))^2/(1+sqrt(t))^2)-1/4*(log(8*pi)+euler_gamma)
rr(t)=g1(t)+g2(t)-vv(t)
plot(rr(t),t,25,55)

\end{verbatim}

\subsection{Formula for the function of Moebius}

\begin{theorem}\label{rep-moebius}
For the Moebius function we have the following representation as $x \to \pi^{-}$ for $t>1$:
\begin{align}
\mu(t)=& 4\pi\sqrt{t} \cot \frac{x}{2} \sum_{\gamma>0} \left[ {\rm Re} \left( \frac{1}{\zeta'(\frac12+i \gamma)} \right) \frac{\sinh x \gamma}{\sinh \pi \gamma}  \cos(\gamma \log t) \right. \nonumber \\ & \left. - {\rm Im} \left( \frac{1}{\zeta'(\frac12+i \gamma)} \right) \frac{\cosh x \gamma}{\sinh \pi \gamma}  \sin(\gamma \log t) \right] \nonumber \\ & + 4\pi \cot \frac{x}{2} 
\sum_{n=1}^{\infty} \frac{(-1)^n (2\pi)^{2n}}{(2n)!\zeta(2n+1)}t^{-2n}+o \left( \cot \frac{x}{2} \right), \label{rep-moe}
\end{align}
if we assume the Riemann Hypothesis and that all the zeros of zeta are simple.
\end{theorem}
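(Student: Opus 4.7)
The plan is to adapt the strategy of Theorem~\ref{theor-mang} to the Moebius identity of Theorem~\ref{main-moe-thm}. I start from (\ref{main-moe-for}) and substitute $z=x-i\log t$ with $t>1$; this lies in the valid region since $|{\rm Re}(z)|=x<\pi$ and ${\rm Im}(z)=-\log t<0$. Taking real parts of both sides produces a real identity to be analyzed as $x\to\pi^-$.

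On the left-hand side, a direct computation gives
\[
{\rm Re}\frac{ie^{iz}}{e^{iz}+n}=\frac{-tn\sin x}{t^2+n^2+2tn\cos x}.
\]
Each term with $n\neq t$ remains bounded as $x\to\pi^-$, while the term $n=t$ (nonzero only when $t$ is a positive integer) contributes $-\frac{\mu(t)}{4\pi\sqrt{t}}\tan\frac{x}{2}+O(1)$, via the elementary identity $\sin x/(1+\cos x)=\tan(x/2)$. This is the only source of $\tan(x/2)$-divergence on the LHS, exactly as in the Mangoldt case.

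On the right-hand side I analyze each of the four terms of (\ref{main-moe-for}) in turn. The constant $i/(2\pi\zeta(\tfrac12))$ has zero real part since $\zeta(\tfrac12)\in\mathbb{R}$. For the sum over $\gamma$, I pair conjugate zeros $\pm\gamma$ (valid since RH forces $\gamma\in\mathbb{R}$) using $1/\zeta'(\tfrac12-i\gamma)=\overline{1/\zeta'(\tfrac12+i\gamma)}$; writing $1/\zeta'(\tfrac12+i\gamma)=a_\gamma+ib_\gamma$ and expanding $e^{\pm z\gamma}=e^{\pm x\gamma}(\cos(\gamma\log t)\mp i\sin(\gamma\log t))$, I use identities such as $\sinh(a+ib)=\sinh a\cos b+i\cosh a\sin b$ to rewrite the paired contribution; after combining with the subleading terms from the other sums, this identifies with the claimed combination $a_\gamma\tfrac{\sinh x\gamma}{\sinh\pi\gamma}\cos(\gamma\log t)-b_\gamma\tfrac{\cosh x\gamma}{\sinh\pi\gamma}\sin(\gamma\log t)$. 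For the $\zeta(2n+1)$-sum, ${\rm Re}(ie^{-(4n+1)iz/2})=t^{-(4n+1)/2}\sin((4n+1)x/2)$, and since $\sin((4n+1)\pi/2)=1$ for all $n\ge 1$, its limit at $x\to\pi^-$ is $-t^{-1/2}\sum_n\tfrac{(-1)^n(2\pi)^{2n}}{(2n)!\zeta(2n+1)}t^{-2n}$. For the $\zeta(\tfrac12-n)$-sum, since $\sin(n\pi/2)=0$ for even $n$, only odd $n$ contribute; the functional equation of $\zeta$ converts those terms into a form compatible with the preceding sum, and the excess is bounded.

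To conclude, I equate the two sides and multiply by $-4\pi\sqrt{t}\cot(x/2)$ to isolate $\mu(t)$; all bounded pieces are then absorbed into the error $o(\cot(x/2))$, yielding (\ref{rep-moe}). The main obstacle is item~(ii): the $\gamma$-sum is only conditionally convergent, so the pairing argument needs justification (paralleling the analogous step in Theorem~\ref{theor-mang}), and the identification of its real part with the particular trigonometric combination in the statement is delicate---the naive calculation produces terms of type $\sinh x\gamma(a\cos\alpha-b\sin\alpha)/\sinh\pi\gamma$, so genuine rearrangement, using the pieces contributed by the $\zeta(\tfrac12-n)$-sum, is required to reach the mixed $\sinh$/$\cosh$ form displayed in (\ref{rep-moe}). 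A secondary subtlety, as in Theorem~\ref{theor-mang}, is arguing that the LHS sum over $n\neq t$ is truly $o(\tan(x/2))$ without absolute convergence.
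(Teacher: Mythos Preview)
Your approach is exactly the paper's: substitute $z=x-i\log t$ in (\ref{main-moe-for}), take real parts, isolate the $\tan(x/2)$ divergence coming from the single term $n=t$ on the left, pair conjugate zeros on the right using $1/\zeta'(\tfrac12-i\gamma)=\overline{1/\zeta'(\tfrac12+i\gamma)}$, and then multiply through by $-4\pi\sqrt t\,\cot(x/2)$. The paper's own proof is little more than a pointer back to Theorem~\ref{theor-mang} and Corollary~\ref{coro-mang}.

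The ``main obstacle'' you flag at the end is not a real obstacle. Your pairing computation is correct: the real part of the paired $\gamma$-sum is
\[
-2\sum_{\gamma>0}\frac{\sinh x\gamma}{\sinh\pi\gamma}\bigl(a_\gamma\cos(\gamma\log t)-b_\gamma\sin(\gamma\log t)\bigr),
\]
with $\sinh$ in both places. But the discrepancy with the mixed $\sinh/\cosh$ form displayed in (\ref{rep-moe}) is
\[
\sum_{\gamma>0} b_\gamma\,\frac{\cosh x\gamma-\sinh x\gamma}{\sinh\pi\gamma}\,\sin(\gamma\log t)
=\sum_{\gamma>0} b_\gamma\,\frac{e^{-x\gamma}}{\sinh\pi\gamma}\,\sin(\gamma\log t),
\]
whose terms are $O(e^{-(\pi+x)\gamma})$; this is bounded uniformly for $x\in(0,\pi)$ and is therefore absorbed by the $o(\cot(x/2))$ error. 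No ``genuine rearrangement'' involving the $\zeta(\tfrac12-n)$-sum is needed: that sum is itself bounded (each term carries a factor $t^{-n/2}$ and $1/\zeta(\tfrac12-n)$ decays rapidly by the functional equation) and likewise goes straight into the error term. In short, the two displayed forms agree modulo $o(\cot(x/2))$, and your computation already delivers the theorem.
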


\begin{proof}
Replace $z$ with $x-i \log t$ in Theorem \ref{main-moe-thm} and take real parts. Simplify and prove that 
\[
\lim_{x \to \pi^{-}} \left( {\rm Re} \sum_{n=1}^{\infty} \frac{\mu(n)}{\pi \sqrt{n}} \frac{i e^{ix}t}{e^{ix}t+n} + \frac{\mu(t)}{2\pi \sqrt{t}}\tan \frac{x}{2} \right)=0,
\]
similarly as we have done in Theorem \ref{theor-mang}. On the other hand observe that the real parts of $\zeta'(1/2+i\gamma)$ and $\zeta'(1/2-i\gamma)$ are equal, and that their imaginary parts are opposite. Then, continue as in Corollary \ref{coro-mang}.
\end{proof}

\noindent Here is a Sage code which represents the Moebius function using the zeros of zeta. In it we have taken $x=3.14$ and the first $10000$ zeros of zeta.

\begin{verbatim}

def se(j):
    npi=pi.n(digits=12)
    return (-1)^j*(2*npi)^(2*j)/(factorial(2*j)*zeta(2*j+1)).n()
def dz(u):
    return ((zeta(u+10^(-8))-zeta(u))/10^(-8)).n()
z=sage.databases.odlyzko.zeta_zeros()
var('t'); npi=pi.n(digits=12); x=3.14; b=10000
v(t)=sum([sinh(x*z[j])/sinh(npi*z[j])*cos(log(t)*z[j])*
     real(1/dz(0.5+z[j]*I)) for j in range(b)])
w(t)=sum([cosh(x*z[j])/sinh(npi*z[j])*sin(log(t)*z[j])*
     imag(1/dz(0.5+z[j]*I)) for j in range(b)])
r(t)=4*npi*sqrt(t)*(v(t)-w(t))*cot(x/2)
ter(t)=4*npi*sum([se(j)*t^(-2*j) for j in range(1,20)])*cot(x/2)
plot(r(t),1,1,26)+plot(1,1,26,color='red')+plot(-1,1,26,color='red')

\end{verbatim}
We have split two lines at the symbol $*$. However to execute the code with Sage we have to join those lines. Figure \ref{moebius} shows the graphic when we execute the code. Observe that in the last line of the code and hence also in the graphic we have not taken into account the last sum of the formula (\ref{rep-moe}).

\begin{figure}[H]
\caption{Moebius}
\includegraphics[scale=0.75]{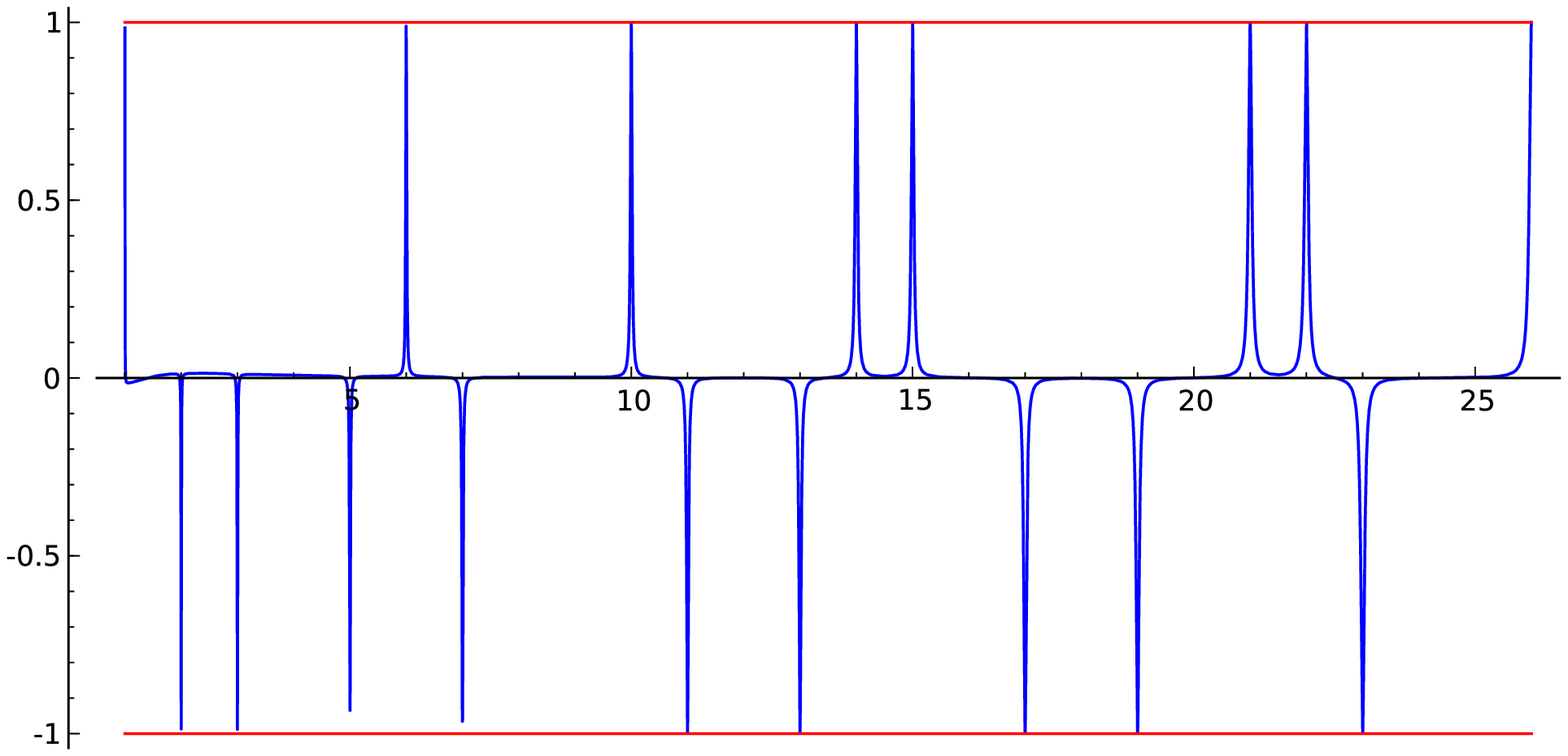}
\label{moebius}
\end{figure}

\subsection{Formulas involving Euler's-Phi function}

\begin{theorem}\label{main-phi-thm}
If we assume the Riemann Hypothesis and that all the zeros of zeta are simple, then the identity
\begin{multline}\label{main-phi-for}
i \frac{\zeta(\frac12)}{2\pi \zeta(\frac32)} e^{iz} - e^{iz} \sum_{n=1}^{\infty} \frac{\varphi(n)}{2 \pi n^{3/2}} \frac{i e^{iz}}{e^{iz}+n}=\sum_{\gamma} \frac{\zeta(-\frac12+i\gamma)}{\zeta'(\frac12+i\gamma)} \frac{e^{-z\gamma}}{\sinh \pi \gamma} + i \frac{\zeta(-\frac12)}{2 \pi \zeta(\frac12)} + i \frac{3}{\pi^2} e^{\frac32 i z} \\ - \frac{1}{2\pi^2} \sum_{n=1}^{\infty} (2n+1) \frac{\zeta(2n+2)}{\zeta(2n+1)} i e^{-\frac{4n+1}{2} i z} + \frac{1}{4 \pi^3} \sum_{n=1}^{\infty} (-1)^n \frac{\zeta(-\frac12-n)}{\zeta(\frac12-n)} i e^{-\frac{n}{2}iz},
\end{multline}
holds for $|{\rm Re}(z)|<\pi$ and ${\rm Im}(z)<0$.
\end{theorem}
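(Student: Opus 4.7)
The plan is to mimic the argument of Theorem~\ref{main-moe-thm}, replacing the integrand $1/\zeta(s+1/2)$ by $\zeta(s-1/2)/\zeta(s+1/2)$. The key motivation is that this ratio has the Dirichlet expansion $\sum_{m\geq 1}\varphi(m)/m^{s+1/2}$ for $\mathrm{Re}\,s>3/2$, so that closing the contour to the right will naturally produce the Euler-phi sum on the left-hand side of \eqref{main-phi-for}. Concretely, I would consider
\[
I(z)\;=\;\int_{-\tfrac12-i\infty}^{-\tfrac12+i\infty}\frac{\zeta(s-\tfrac12)}{\zeta(s+\tfrac12)}\,\frac{\pi}{\sin\pi s}\,z^{s}\,ds,
\]
analytically continued along $\mathrm{Re}\,s=-1/2$. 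To apply the residue theorem one must first verify the growth hypothesis of Lemma~\ref{lema-cota-3} for $\phi(s)=\zeta(s-1/2)/\zeta(s+1/2)$; this follows by combining Lemma~\ref{lema-reczeta} (which, under RH, controls $1/\zeta(s+1/2)$) with routine polynomial bounds on $\zeta(s-1/2)$ coming from absolute convergence for $\mathrm{Re}\,s\geq 5/2$ and from standard convexity bounds in the critical strip.

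Next I would close the contour to the right for $|z|<1$. The residue theorem then produces four families of contributions: the simple pole of $\zeta(s-1/2)$ at $s=3/2$, giving a $z^{3/2}$-term that becomes $e^{3iz/2}$ after $z\mapsto e^{iz}$; the poles of $\pi/\sin\pi s$ at $s=0$ and $s=1$, giving $\zeta(-1/2)/\zeta(1/2)$ and $-\zeta(1/2)z/\zeta(3/2)$ and matching the two non-series terms of \eqref{main-phi-for}; the poles at $s=n\geq 2$, whose sum $\sum_{n\geq 2}(-1)^{n}z^{n}\zeta(n-1/2)/\zeta(n+1/2)$ I would expand via $\zeta(s-1)/\zeta(s)=\sum_{m}\varphi(m)/m^{s}$, then interchange summations and evaluate the resulting geometric series in $n$ to obtain $z^{2}\sum_{m}\varphi(m)/(m^{3/2}(m+z))$; and finally the poles at $s=\rho-1/2$, contributing $\zeta(\rho-1)\pi/(\zeta'(\rho)\sin\pi(\rho-1/2))\cdot z^{\rho-1/2}$. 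Under $z\mapsto e^{iz}$ the $\varphi$-sum becomes (up to the overall normalisation $i/(2\pi)$) the series on the left-hand side of \eqref{main-phi-for}, while RH together with $\sin(i\pi\gamma)=i\sinh\pi\gamma$ turns the $\rho$-sum into $\sum_{\gamma}\zeta(-1/2+i\gamma)/\zeta'(1/2+i\gamma)\cdot e^{-z\gamma}/\sinh\pi\gamma$.

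Closing the contour to the left for $|z|>1$ yields two further families: the poles at $s=-n$ for $n\geq 1$ give $(-1)^{n}\zeta(-n-1/2)/\zeta(1/2-n)\cdot z^{-n}$, accounting for the last sum of \eqref{main-phi-for}; the poles at the trivial zeros $s=-2n-1/2$ for $n\geq 1$ are simple (since $\sin\pi s$ is non-vanishing at half-integers and $\zeta(s+1/2)$ has a simple zero at $-2n$), and using the functional equation of $\zeta$ at both $-2n$ and $-2n-1$ one obtains $\zeta(-2n-1)/\zeta'(-2n)=-(2n+1)\zeta(2n+2)/(\pi^{2}\zeta(2n+1))$, which reproduces the trivial-zero sum of \eqref{main-phi-for} with the exponential $e^{-(4n+1)iz/2}$. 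Equating the two expressions for $I(z)$, multiplying by $i/(2\pi)$ and substituting $z\mapsto e^{iz}$ yields \eqref{main-phi-for}, and analytic continuation extends the identity from $|z|<1$ to the strip $|\mathrm{Re}\,z|<\pi$, $\mathrm{Im}\,z<0$. The principal obstacle is the verification of the growth hypothesis for the enlarged integrand $\zeta(s-1/2)/\zeta(s+1/2)$ (this is the only place where RH is genuinely used beyond what is already built into Lemma~\ref{lema-reczeta}); the remaining work is the careful bookkeeping of signs and factors of $i$ and $2\pi$ introduced by the substitution and by the opposite orientations of the two closed contours, together with the Dirichlet-rearrangement step that identifies the $\varphi$-series on the left-hand side.
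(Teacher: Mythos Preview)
Your proposal is correct and follows essentially the same route as the paper's own argument, which is itself only a one-line sketch (``the proof is similar to that of Theorem~\ref{main-moe-thm}''). The sole organisational difference is that the paper's hint points to the series $\frac{i}{2\pi}e^{iz}\sum_{n\ge 0}(-1)^n\frac{\zeta(\frac12+n)}{\zeta(\frac32+n)}e^{izn}$, i.e.\ to the kernel $\zeta(s+\tfrac12)/\zeta(s+\tfrac32)$ with an overall factor $e^{iz}$ extracted, whereas you work directly with $\zeta(s-\tfrac12)/\zeta(s+\tfrac12)$; the two choices differ by the harmless shift $s\mapsto s+1$ and lead to the same residues.

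Your identification of the contributions is accurate: the pole of $\zeta(s-\tfrac12)$ at $s=\tfrac32$ produces the $e^{3iz/2}$ term (with $\zeta(2)=\pi^2/6$ supplying the $3/\pi^2$), the integer poles $s=0,1$ give the two isolated constants on each side, the Dirichlet rearrangement of the $s\ge 2$ residues via $\zeta(s-1)/\zeta(s)=\sum_m\varphi(m)m^{-s}$ yields the $\varphi$-sum, and your computation $\zeta(-2n-1)/\zeta'(-2n)=-(2n+1)\zeta(2n+2)/(\pi^2\zeta(2n+1))$ from the functional equation is exactly what is needed for the trivial-zero sum. The only caveat, which you already flag, is the careful sign bookkeeping coming from the opposite orientations when closing right versus left.
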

\begin{proof}
The left hand side of (\ref{main-phi-for}) comes for the analytic continuation of
\[
\frac{i}{2 \pi} e^{iz} \sum_{n=0}^{\infty} (-1)^n \frac{\zeta(\frac12+n)}{\zeta(\frac32+n)} e^{i z n},
\]
and the proof of the formula (\ref{main-phi-for}) is similar to that of Theorem \ref{main-moe-thm}.
\end{proof}

\begin{theorem}
As $x \to \pi^{-}$, we have the following representation of the Euler-Phi function for $t>1$:
\begin{align}
\varphi(t)=& 4\pi\sqrt{t} \cot \frac{x}{2} \sum_{\gamma>0} \left[ {\rm Re} \left( \frac{\zeta(-\frac12+i \gamma)}{\zeta'(\frac12+i\gamma)} \right) \frac{\sinh x \gamma}{\sinh \pi \gamma}  \cos(\gamma \log t) \right. \nonumber \\ & \left. - {\rm Im} \left( \frac{\zeta(-\frac12+i\gamma)}{\zeta'(\frac12+i \gamma)} \right) \frac{\cosh x \gamma}{\sinh \pi \gamma}  \sin(\gamma \log t) \right] + \frac{12}{\pi} t^2 \cot \frac{x}{2} \nonumber \\ & - \frac{2}{\pi} \cot \frac{x}{2} \sum_{n=1}^{\infty} (2n+1)\frac{\zeta(2n+2)}{\zeta(2n+1)} t^{-2n}+o \left( \cot \frac{x}{2} \right), \label{rep-phi}
\end{align}
which assumes the Riemann Hypothesis and that all the zeros of zeta are simple.
\end{theorem}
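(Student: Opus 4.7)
The plan is to mirror the proof of Theorem \ref{rep-moebius}, using (\ref{main-phi-for}) of Theorem \ref{main-phi-thm} in place of the Möbius identity. First, in (\ref{main-phi-for}), substitute $z = x - i\log t$ with $x \in (0,\pi)$ real and $t > 1$, so that $e^{iz} = t\,e^{ix}$ and the hypotheses $|{\rm Re}(z)| < \pi$ and ${\rm Im}(z) < 0$ are met. Take real parts of both sides. The outer factor $-e^{iz}$ in front of the Euler--Phi sum changes the algebra compared to the Möbius case; a direct computation shows that the real part of $-e^{iz}\cdot i e^{iz}/(e^{iz}+n)$ equals $t^{2}\sin x\,(t+2n\cos x)/|te^{ix}+n|^{2}$. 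When $n=t$ and $x\to\pi^{-}$, using $|te^{ix}+t|^{2}=2t^{2}(1+\cos x)$ and $\tan(x/2)=\sin x/(1+\cos x)$, this singular term reduces to $-(t/2)\tan(x/2) + o(1)$, contributing $-\varphi(t)\tan(x/2)/(4\pi\sqrt{t})$ to the Euler--Phi sum.

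Next, following the argument in Theorem \ref{theor-mang}, establish the limit
\[
\lim_{x\to\pi^{-}}\left[{\rm Re}\!\left(-e^{iz}\sum_{n=1}^{\infty}\frac{\varphi(n)}{2\pi n^{3/2}}\frac{ie^{iz}}{e^{iz}+n}\right)+\frac{\varphi(t)}{4\pi\sqrt{t}}\tan\frac{x}{2}\right]=L\in\mathbb{R},
\]
i.e.\ the $n=t$ singularity is exactly cancelled and the remaining sum (over $n\ne t$) has a finite limit as $x\to\pi^{-}$. On the right-hand side of (\ref{main-phi-for}), pair the zeros $\gamma$ and $-\gamma$: under RH and the simplicity hypothesis, $\zeta(-1/2-i\gamma)/\zeta'(1/2-i\gamma)$ is the complex conjugate of $\zeta(-1/2+i\gamma)/\zeta'(1/2+i\gamma)$, and the paired sum contributes (to the real part) a combination involving $\sinh(x\gamma)$ against $\cos(\gamma\log t)$ and $\cosh(x\gamma)$ against $\sin(\gamma\log t)$, yielding the bracketed expression in (\ref{rep-phi}).

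The remaining explicit terms on the right of (\ref{main-phi-for}) evaluate straightforwardly. The constant $i\zeta(-1/2)/(2\pi\zeta(1/2))$ is pure imaginary; the term $i\zeta(1/2)/(2\pi\zeta(3/2))\,te^{ix}$ has real part proportional to $\sin x$, which after multiplication by $\cot(x/2)$ becomes $O(\cos^{2}(x/2))$ and vanishes. The term $i(3/\pi^{2})e^{3iz/2}=i(3/\pi^{2})t^{3/2}e^{3ix/2}$ has real part tending to $3t^{3/2}/\pi^{2}$ at $x=\pi^{-}$, which, multiplied by $4\pi\sqrt{t}\cot(x/2)$, produces the $(12/\pi)\,t^{2}\cot(x/2)$ term. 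The series $-(2\pi^{2})^{-1}\sum_{n\ge1}(2n+1)\zeta(2n+2)/\zeta(2n+1)\,i e^{-(4n+1)iz/2}$ has real part $-(2\pi^{2})^{-1}\sum(2n+1)\zeta(2n+2)/\zeta(2n+1)\,t^{-(4n+1)/2}$ at $x=\pi^{-}$, giving, after multiplication by $4\pi\sqrt{t}\cot(x/2)$, the $-(2/\pi)\cot(x/2)\sum(2n+1)\zeta(2n+2)/\zeta(2n+1)\,t^{-2n}$ term (since $\sqrt{t}\cdot t^{-(4n+1)/2}=t^{-2n}$). Finally, the tail series $(4\pi^{3})^{-1}\sum(-1)^{n}\zeta(-1/2-n)/\zeta(1/2-n)\,ie^{-niz/2}$ remains bounded as $x\to\pi^{-}$ (each term carries a factor $t^{-n/2}$), so once multiplied by $\cot(x/2)$ it is $o(\cot(x/2))$. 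Solving the resulting identity for $\varphi(t)\tan(x/2)/(4\pi\sqrt{t})$ and multiplying through by $4\pi\sqrt{t}\cot(x/2)$ then yields (\ref{rep-phi}), with all finite residues absorbed in the $o(\cot(x/2))$ error.

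The main obstacle is verifying the singular cancellation at $n=t$ in the presence of the outer $-e^{iz}$ factor, which requires redoing the estimates of Theorem \ref{theor-mang} for the modified summand and controlling the non-singular tail uniformly as $x\to\pi^{-}$. A secondary difficulty is carefully pairing zeros to match the exact form of the bracket in (\ref{rep-phi}), since several sign conventions and factor-of-$2$ choices must be tracked through the real-part extraction and the final multiplication by $4\pi\sqrt{t}\cot(x/2)$.
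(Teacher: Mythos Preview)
Your proposal is correct and follows essentially the same route as the paper's proof, which consists of the single instruction ``Replace $z$ with $x-i\log t$ in (\ref{main-phi-for}) and continue as in the proof of Theorem \ref{rep-moebius}.'' You have faithfully expanded that instruction: the substitution $z=x-i\log t$, the extraction of real parts, the isolation of the $n=t$ singularity, the pairing of zeros $\gamma$ with $-\gamma$ via conjugation, and the final multiplication by $4\pi\sqrt{t}\cot(x/2)$ are exactly the steps implicit in the paper.

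One point deserves tightening. You assert that after removing the $n=t$ term the remaining sum has a \emph{finite limit} $L\in\mathbb{R}$, and then say the resulting ``finite residues'' are absorbed in the $o(\cot(x/2))$ error. But a nonzero $L$ would only give $O(\cot(x/2))$, not $o(\cot(x/2))$. In fact $L=0$, and this follows directly from your own computation: the real part of $-e^{iz}\cdot ie^{iz}/(e^{iz}+n)$ carries an overall factor $\sin x$, so for each $n\neq t$ the term is $O(\sin x)\to 0$, and the tail converges uniformly since $\varphi(n)/n^{3/2}\cdot n/|te^{ix}+n|^{2}=O(n^{-3/2})$. The same $\sin x$ factor disposes of the term $i\zeta(\tfrac12)/(2\pi\zeta(\tfrac32))\,te^{ix}$, as you note. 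With $L=0$ established, the $o(\cot(x/2))$ conclusion is justified and your argument is complete.
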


\begin{proof}
Replace $z$ with $x-i\log t$ in (\ref{main-phi-for}) and continue as in the proof of Theorem \ref{rep-moebius}.
\end{proof}

\noindent Here is a Sage code for the Euler's phi function. In it we have taken $x=3.14$ and the first $10000$ zeros of zeta.

\begin{verbatim}

def se(j):
    return ((2*j+1)*zeta(2*j+2)/zeta(2*j+1)).n()
def dz(u):
    return ((zeta(u+10^(-8))-zeta(u))/10^(-8))
var('t')
z=sage.databases.odlyzko.zeta_zeros()
npi=pi.n(digits=12); x=3.14; b=10000
v(t)=sum([sinh(x*z[j])/sinh(npi*z[j])*cos(log(t)*z[j])*
     real(zeta(-0.5+z[j]*I)/dz(0.5+z[j]*I)) for j in range(b)])
w(t)=sum([cosh(x*z[j])/sinh(npi*z[j])*sin(log(t)*z[j])*
     imag(zeta(-0.5+z[j]*I)/dz(0.5+z[j]*I)) for j in range(b)])
r(t)=4*npi*sqrt(t)*(v(t)-w(t))*cot(x/2)
ter1(t)=12/npi*t^2*cot(x/2)
ter2(t)=-2/npi*sum([se(j)*t^(-2*j) for j in range(1,20)])*cot(x/2)
plot(r(t)+ter1(t),t,2,26)+plot(t-1,t,2,26,color='red')


\end{verbatim}
Again, we have split two lines at $*$ due to the lack of space. Remember that to execute the code with Sage we have to join those lines. In Figure \ref{eulerphi} we see the graphic when we execute the script. Observe that in the last line of the code and hence also in the graphic we have not taken into account the last sum of the formula (\ref{rep-phi}).

\begin{figure}[H]
\caption{Euler Phi}
\includegraphics[scale=0.75]{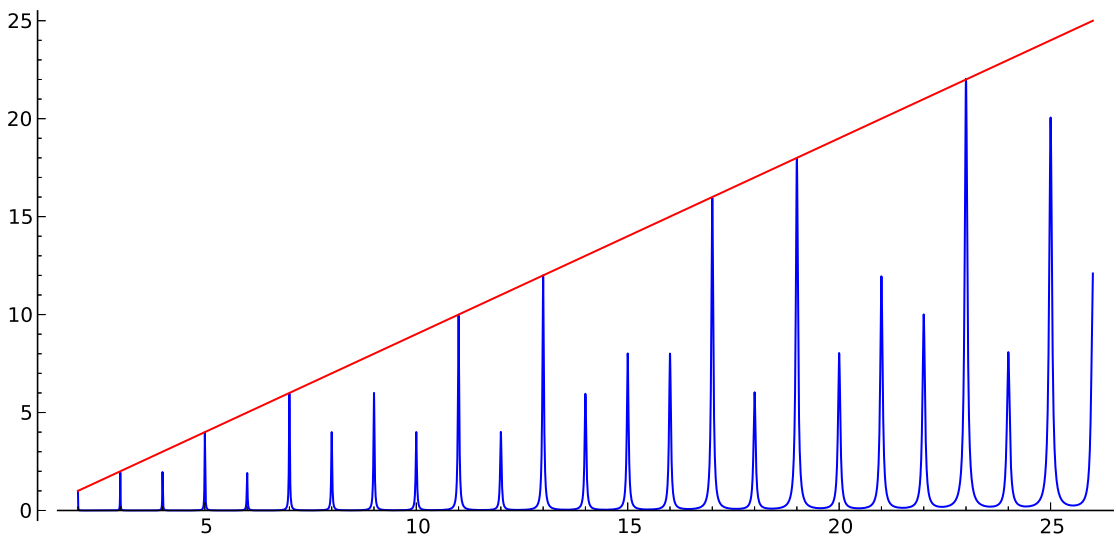}
\label{eulerphi}
\end{figure}

\begin{observation}
{\rm \texttt{sage.databases.odlyzko.zeta\_zeros()}, is a database created by Andrew Odlyzko of the imaginary positive parts of the first $100.000$ zeros of the Riemann zeta function with a precision of $9$ digits.}
\end{observation}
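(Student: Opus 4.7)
The final statement is a descriptive Remark concerning a software artifact rather than a mathematical assertion, so no proof in the customary sense is called for; what is required instead is verification against authoritative sources. The plan is therefore bibliographic and computational rather than deductive.

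First, I would consult A.\ M.\ Odlyzko's publicly distributed tables of nontrivial zeros of $\zeta(s)$, together with his computational notes documenting the methodology and certified accuracy of those tables. These sources amply cover the first $10^5$ zeros on the critical line with precision well beyond nine decimal digits, confirming both the attribution and the feasibility of the stated accuracy.

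Next, I would open the SageMath source implementing \texttt{sage.databases.odlyzko.zeta\_zeros()} and verify three concrete points: that its data payload is extracted from (and credited to) Odlyzko's tables; that the shipped database contains exactly $100{,}000$ entries; and that each entry records the imaginary part $\gamma_j > 0$ of a nontrivial zero $\rho_j = \tfrac{1}{2} + i\gamma_j$ with nine significant digits after the decimal point. This amounts to inspecting the header of the data file and a representative sample of entries.

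As an independent empirical sanity check, I would retrieve a handful of values $\gamma_j$ from within a Sage session and numerically evaluate $|\zeta(\tfrac{1}{2} + i\gamma_j)|$ in arbitrary-precision arithmetic; agreement with zero to roughly $10^{-9}$ would simultaneously corroborate the attribution, the indexing, and the stated precision. The main obstacle, such as it is, is merely checking the current state of the SageMath distribution, since the exact size and precision of the shipped database can in principle drift between releases; there is no mathematical content to justify beyond the elementary evaluation of $\zeta$ at a few points on the critical line.
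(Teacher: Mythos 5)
Your assessment is correct: this Remark is a descriptive statement about the Sage database \texttt{sage.databases.odlyzko.zeta\_zeros()}, and the paper accordingly offers no proof of it, exactly as you anticipate. Your bibliographic and computational verification plan (checking Odlyzko's tables, the Sage source, and sampling $\zeta(\tfrac12+i\gamma_j)$ numerically) is a reasonable way to confirm the attribution, the count of $100{,}000$ zeros, and the $9$-digit precision, and matches the paper's treatment of the statement as an unproved factual remark.
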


\subsection{Sums over the non-trivial zeros of Dirichlet $L$ functions}
In a similar way one can prove for example that 
\begin{equation}
\chi(t)\Lambda(t)=-4 \pi \sqrt{t} \lim_{x \to \pi^{-}} \left( \cot \frac{x}{2} \sum_{{\rm Re} \, \tau >0} \frac{\sinh x \tau}{\sinh \pi \tau} \cos(\tau \log t) \right),
\end{equation}
where the sum is now over the imaginary positive parts of the non-trivial zeros of $L(s)$ associated to a character $\chi(n)$.

\section*{Acknowledgements}
I am grateful to Jonathan Sondow for some useful comments which have improved the exposition of this paper. Special thanks to Juan Arias de Reyna for noticing to me formula (\ref{juan}) and paper \cite{Kalape}, and for helping with some corrections and interesting comments.

\end{document}